\documentclass[12pt]{amsart}
\usepackage[T1]{fontenc}
\usepackage{amsmath,amssymb,amsthm}
\usepackage[margin=2.9cm]{geometry}
\usepackage{xcolor}
\usepackage[colorlinks=true,linkcolor=blue,citecolor=blue]{hyperref}
\newtheorem{theorem}{Theorem}[section]
\newtheorem{proposition}[theorem]{Proposition}
\newtheorem{lemma}[theorem]{Lemma}
\newtheorem{corollary}[theorem]{Corollary}

\theoremstyle{remark}
\newtheorem{remark}[theorem]{Remark}

\theoremstyle{definition}
\newtheorem{definition}[theorem]{Definition}

\theoremstyle{plain}
\newtheorem{introtheorem}{Theorem}

\newtheorem{introcorollary}[introtheorem]{Corollary}

\DeclareMathOperator{\Ghd}{Ghd}
\DeclareMathOperator{\Gcd}{Gcd}
\DeclareMathOperator{\Gfd}{Gfd}
\DeclareMathOperator{\Gpd}{Gpd}
\DeclareMathOperator{\fd}{fd}
\DeclareMathOperator{\pd}{pd}
\DeclareMathOperator{\cd}{cd}
\DeclareMathOperator{\hd}{hd}
\DeclareMathOperator{\sfli}{sfli}
\DeclareMathOperator{\spli}{spli}
\DeclareMathOperator{\silp}{silp}
\DeclareMathOperator{\id}{id}
\DeclareMathOperator{\Gwgl}{Gwgl.dim}
\DeclareMathOperator{\Ggl}{Ggl.dim}
\DeclareMathOperator{\wgl}{w.gl.dim}
\DeclareMathOperator{\gldim}{gl.dim}
\DeclareMathOperator{\Hom}{Hom}
\DeclareMathOperator{\Ext}{Ext}
\DeclareMathOperator{\Tor}{Tor}
\DeclareMathOperator{\im}{im}
\DeclareMathOperator{\coker}{coker}
\newcommand{\PGFdim}{\operatorname{PGF\text{-}dim}}
\newcommand{\PGFcd}{\widetilde{\Gcd}}
\newcommand{\GFlat}{\mathtt{GFlat}}
\newcommand{\GProj}{\mathtt{GProj}}
\newcommand{\PGF}{\mathtt{PGF}}
\newcommand{\Z}{\mathbb{Z}}
\newcommand{\Q}{\mathbb{Q}}
\newcommand{\alephm}{\aleph_m}

\begin{document}

\title[$\alephm$-presented Gorenstein flat modules and Gorenstein dimensions of groups]{$\alephm$-presented Gorenstein flat modules and\\ Gorenstein dimensions of groups}
\author{Dimitra-Dionysia Stergiopoulou}
\address{Department of Mathematics, University of Thessaly, Lamia, Greece}
\email{dstergiop@math.uoa.gr, dstergiopoulou@uth.gr}

\begin{abstract}
Let $G$ be a group and $k$ be a commutative ring. The Gorenstein homological dimension $\Ghd_kG$ and the Gorenstein cohomological dimension $\Gcd_kG$ of  $G$ over $k$ are defined as the Gorenstein flat and the Gorenstein projective dimension, respectively, of the trivial $kG$-module $k$. We prove that the Gorenstein cohomological dimension of a countable group $G$ is at most one more than its Gorenstein homological dimension and, more generally, that $\Gcd_kG\le\Ghd_kG+m+1$ for every $m\ge0$ and every group $G$ of cardinality at most $\aleph_m$. When the supremum $\sfli k$ of the flat dimensions of the injective $k$-modules is finite, this gives $\Ghd_kG\le\Gcd_kG\le\Ghd_kG+1$ for every countable group $G$, so that the two dimensions are finite simultaneously. These results are Gorenstein analogues of Bieri's inequality for the homological and the cohomological dimension of countable groups. They rest on a module-theoretic theorem of independent interest, which is a Gorenstein version of results of Jensen and Osofsky on the projective dimension of flat modules: over an arbitrary ring, every $\aleph_m$-presented Gorenstein flat module has projectively coresolved Gorenstein flat dimension at most $m+1$. An appendix characterizes the modules of PGF-dimension at most $n$ as the direct summands of the strongly $n$-PGF modules. 
\end{abstract}

\subjclass[2020]{Primary 20J05; Secondary 16E10, 18G25, 20J06}
\keywords{$\aleph_m$-presented module, Gorenstein flat module, PGF module, PGF-dimension, Gorenstein homological dimension, countable group, projective dimension of flat modules}
\renewcommand{\subjclassname}{%
  \textup{2020} Mathematics Subject Classification}
\maketitle

\section{Introduction}
For a group $G$ and a commutative ring $k$, the homological dimension $\hd_kG$ never exceeds the cohomological dimension $\cd_kG$, and the two may differ. When the two dimensions coincide and how far apart they can be are classical questions in the homological theory of groups. They coincide for groups of type $\mathrm{FP}_\infty$ over $k$, in particular for duality groups, whereas for countable groups the cohomological dimension is controlled by the homological one up to an error of at most one: Bieri proved that $\cd_kG\le\hd_kG+1$ for every countable group $G$ (see \cite[Theorem~4.6]{Bieri}). At the level of modules, this rests on the fact that a countably presented flat module has projective dimension at most $1$, and the theorems of Jensen and Osofsky bounding the projective dimension of an $\alephm$-presented flat module by $m+1$ \cite{Jen,Oso} yield in the same way the inequality $\cd_kG\le\hd_kG+m+1$ whenever $|G|\le\alephm$ (see the remark in \cite[p.~56]{Bieri}). Thus, in the classical setting, the question whether the finiteness of $\hd_kG$ implies that of $\cd_kG$ is settled for groups of cardinality less than $\aleph_\omega$, while it remains open in general.

Torsion can force the classical homological and cohomological dimensions to be infinite. More precisely, if $G$ contains an element of finite order $n$ such that $n$ is not invertible in $k$, then $\operatorname{hd}_k G=\operatorname{cd}_k G=\infty$. Their Gorenstein counterparts, $\Ghd_k G$ and $\Gcd_k G$, refine these invariants and can remain finite even in the presence of such torsion.
 They are defined as the Gorenstein flat and the Gorenstein projective dimension, respectively, of the trivial $kG$-module $k$ (see \cite{ABHS,KS,RY} and \cite{BDT,ET}). They agree with the classical dimensions whenever the latter are finite, but they are finite for many groups with torsion. For instance, $\Ghd_kG=0$ for every locally finite group $G$ \cite[Theorem~1.4]{KS}, and we shall see that $\Gcd_kG\le m+1$ for every locally finite group of cardinality at most $\alephm$ (see Corollary~\ref{cor5.1}). The Gorenstein cohomological dimension is also connected to the classifying space $\underline{E}G$ for proper actions (see \cite{BDT}). One is thus led to ask which of the classical relations between $\hd_kG$ and $\cd_kG$ survive in the Gorenstein setting. It is known that $\Ghd_kG\le\Gcd_kG$ whenever $\sfli k<\infty$ (see \cite[Theorem~3.10]{KS}), with equality for groups of type $\mathrm{FP}_\infty$ over $k$ (see \cite[Theorem~B]{St}). The finiteness of the Gorenstein dimensions of groups has been studied in \cite{ET,KS}.

 The starting point of the present paper is the question whether the finiteness of $\Ghd_kG$ forces that of $\Gcd_kG$ and, if it does, by how much the two invariants can differ. We establish the Gorenstein analogue of Bieri's inequality: for every commutative ring $k$, the Gorenstein cohomological dimension of a countable group is at most one more than its Gorenstein homological dimension, and at most $m+1$ more for groups of cardinality at most $\alephm$. As in the classical case, the group-theoretic statement rests on a module-theoretic one, in which the projectively coresolved Gorenstein flat modules of \v{S}aroch--\v{S}\v{t}ov\'i\v{c}ek \cite{SS} play the role of the projective modules. We fix our notation before stating the results.

Throughout, rings are associative with unit and modules are left modules unless stated otherwise. For a group $G$ and a commutative ring $k$ we regard $k$ as a trivial module over the group algebra $kG$ and set $\Ghd_kG=\Gfd_{kG}k$,
$\Gcd_kG=\Gpd_{kG}k$, $\PGFcd_kG=\PGFdim_{kG}k$,
the Gorenstein homological dimension, the Gorenstein cohomological dimension, and the projectively coresolved Gorenstein flat dimension of $G$ over $k$, respectively (see \cite{St,KS}). We recall that a module is \emph{projectively coresolved Gorenstein flat} (PGF for short) if it is a cycle of an acyclic complex of projective modules which remains acyclic upon tensoring with any injective right module (see \cite{SS}). PGF modules are both Gorenstein projective and Gorenstein flat. The PGF-dimension $\PGFdim_RM$ of a module $M$ is defined by means of resolutions by PGF modules; it dominates both the Gorenstein projective and the Gorenstein flat dimension of $M$.

   Our first result is the Gorenstein analogue of the Jensen--Osofsky bounds, with the class of PGF modules in the role of the projectives.

\begin{introtheorem}\label{theoA}
Let $R$ be an arbitrary ring and $m$ be a nonnegative integer. Then, every $\alephm$-presented Gorenstein flat $R$-module $M$ satisfies
\[
\PGFdim_RM\;\le\;m+1 .
\]
In particular, every countably presented Gorenstein flat module has PGF-dimension at most $1$, and hence Gorenstein projective dimension at most $1$.
\end{introtheorem}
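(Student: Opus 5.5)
The plan is to reduce the statement to the Jensen--Osofsky theorem for flat modules, using the known structure theory relating Gorenstein flat and PGF modules. The key input is the result of \v{S}aroch--\v{S}\v{t}ov\'i\v{c}ek \cite{SS}: a module $M$ is Gorenstein flat if and only if it fits into a short exact sequence $0\to F\to P\to M\to 0$ with $P$ a PGF module and $F$ flat. Equivalently, $M$ is Gorenstein flat if and only if $\Ext^1_R(M,C)=0$ for all cotorsion modules $C$ in the right orthogonal of PGF. The idea is to produce such a sequence in which all terms are $\alephm$-presented. Then $F$ will be an $\alephm$-presented flat module, and Jensen--Osofsky gives $\pd F\le m+1$. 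Hence $\PGFdim F\le m+1$, because projectives are PGF.

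Granting such a sequence, the bound follows from the standard behaviour of PGF-dimension in short exact sequences, using that the PGF class is projectively resolving and closed under direct summands and extensions. From $0\to F\to P\to M\to 0$ with $P$ PGF and $\PGFdim F\le m+1$, we get $\PGFdim M\le \max\{\PGFdim P,\PGFdim F+1\}\le m+2$, which is one too many. To get $m+1$, I would instead use a sequence $0\to M\to P\to N\to 0$, which is available for Gorenstein flat $M$: by \cite{SS}, a Gorenstein flat module embeds in a PGF module with Gorenstein flat cokernel, up to a flat correction. More precisely, I would use the pair of sequences $0\to M\to F'\to N\to 0$ with $F'$ flat and $N$ Gorenstein flat, taken from the flat-coresolution part of the complete flat resolution. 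If all of these can be chosen $\alephm$-presented, the argument runs as follows. First, $\PGFdim N\le m+2$ by the preceding step. Second, $\pd F'\le m+1$. Third, the dimension-shifting lemma (if $0\to M\to F'\to N\to0$ with $M$ Gorenstein flat, then $\PGFdim M\le\max\{\PGFdim F',\PGFdim N-1\}$) gives $\PGFdim M\le m+1$. To justify that lemma, I would use the characterization of PGF-dimension $\le n$ via vanishing of $\Ext^{i}(-,L)$, for $i>n$, against modules $L$ in the orthogonal class $\PGF^\perp$. This is presumably recorded earlier or in the appendix.

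The main obstacle is the cardinality control: showing that an $\alephm$-presented Gorenstein flat module admits the relevant sequences, with flat and Gorenstein flat (or PGF) terms, whose members are again $\alephm$-presented. I would first try a deconstructibility/closure argument. Take a complete flat resolution (an F-totally acyclic complex) having $M$ as a cycle. Then, by a L\"owenheim--Skolem type back-and-forth construction, find a subcomplex of $\alephm$-generated, $\alephm$-presented flat modules that still contains $M$ as a cycle and remains acyclic after tensoring with every injective. It suffices to check the tensor condition against a single injective cogenerator, such as $\prod$ of character modules, or via purity. The approach is to use that flat modules are $\aleph_0$-deconstructible (Kaplansky for flat modules, via pure submodules) and that pure subcomplexes of $\mathbf{I}\otimes$-acyclic complexes of flats stay $\mathbf{I}\otimes$-acyclic. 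One alternates countably many times between enlarging to make submodules pure and enlarging to capture boundaries. This keeps every stage of cardinality at most $\alephm$ (for $\alephm\ge|R|$; otherwise one works with presentation size directly). Once such an $\alephm$-presented complete flat resolution exists, its cycles are $\alephm$-presented Gorenstein flat modules and its terms are $\alephm$-presented flat modules. The dimension count above then yields $\PGFdim_RM\le m+1$. The final sentence of the theorem follows because $\Gpd\le\PGFdim$.
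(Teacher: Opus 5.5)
Your argument rests on the bound $\PGFdim_RN\le m+2$ for the next cycle $N$ in $0\to M\to F'\to N\to0$, and nothing in the proposal establishes it. You want it from a \v{S}aroch--\v{S}\v{t}ov\'i\v{c}ek sequence $0\to F\to P\to N\to0$ with $P$ PGF and $F$ an $\alephm$-presented flat module. Their theorem gives no control on the size of $F$: for an arbitrary flat $F$ one only gets $\PGFdim_RN\le\pd_RF+1$, which may be infinite. Your L\"owenheim--Skolem construction does not produce such a sequence either, since a small $F$-totally acyclic complex has flat terms, not a PGF module mapping onto $N$ with small flat kernel. So the closing sentence ``the dimension count above then yields $\PGFdim_RM\le m+1$'' does not follow.

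Along the complex, Proposition~\ref{prop2.3}(iv)(b) does give $\PGFdim_RZ_i\le\max\{m+1,\PGFdim_RZ_{i-1}-1\}$. This forces the bound $m+1$ once the cycles are known to have a common finite bound on their PGF-dimensions. But that finiteness is exactly the hard part, and your scheme is circular there: bounding one $\alephm$-presented Gorenstein flat module requires a bound for another one of the same kind.

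The missing idea is Lemma~\ref{lem4.1}. Suppose $\pd_RF_i\le p$ for all terms of an $F$-totally acyclic complex. Add up all the sequences $0\to Z_i\to F_i\to Z_{i-1}\to0$ to obtain $0\to N\to Y\to N\to0$ with $N=\bigoplus_iZ_i$, $\pd_RY\le p$ and $\Tor^R_{p+1}(I,N)=0$ for injective $I$. The horseshoe lemma, applied to a length-$p$ truncated projective resolution, then gives $0\to K\to Q\to K\to0$ with $Q$ projective and $\Tor^R_1(I,K)=0$. Hence the $p$-th syzygy $K$ of $N$ is PGF, which yields finiteness and the sharp bound simultaneously.

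Your cardinality step is also problematic. The purity-based L\"owenheim--Skolem argument needs $\alephm\ge|R|$, which is precisely the case already covered by Simson and Gruson--Jensen (Remark~\ref{rem4.6}), whereas the theorem concerns arbitrary $R$. The paper (Proposition~\ref{prop3.1}) avoids purity in two ways. First, only the right half of the complex needs small terms; the left half is any projective resolution of $M$. Second, each $T$ is the colimit of a sub-system of size at most $\alephm$ of a Govorov--Lazard system for a flat $F_0$ containing $M$, chosen so that $u\colon M\to F_0$ factors as $M\xrightarrow{g}T\to F_0$. The map $T\to F_0$ need not be injective, and injectivity of $1_I\otimes g$ is inherited from that of $1_I\otimes u$.
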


The bound in Theorem~\ref{theoA} is best possible for every $m$, already for flat modules (see Remark~\ref{rem4.2}). For rings of cardinality at most $\alephm$ the same bound holds for all Gorenstein flat modules, as a consequence of a theorem of Simson \cite{Simson} and Gruson--Jensen \cite{GJ} on the projective dimension of flat modules and of \cite[Proposition~15]{DE}; this yields an alternative proof of Corollary~\ref{cor4.4} for such rings (see Remark~\ref{rem4.6}). The point of Theorem~\ref{theoA} is that the hypothesis concerns the module and not the ring, and it is precisely this feature which makes its group-theoretic consequences independent of the coefficient ring. Theorem~\ref{theoA} is proved in Section~\ref{sec:cover}; the argument combines an approximation step (see Section~\ref{sec:step}), which produces a coresolution of $M$ by $\alephm$-presented flat modules, with a lemma on complexes (see Section~\ref{sec:cover}) which converts a bound on the projective dimension of the \emph{terms} of an $F$-totally acyclic complex into a bound on the PGF-dimension of its \emph{syzygies}.

Since the PGF-dimension dominates both the Gorenstein projective and the Gorenstein flat dimension (see Proposition~\ref{prop2.3}), the bound of Theorem~\ref{theoA} propagates, by dimension shifting, to every module admitting a projective resolution with $\alephm$-generated terms (see Corollary~\ref{cor4.3}), and in particular to all $\alephm$-generated modules over a ring all of whose left ideals are $\alephm$-generated, e.g.\ over a ring of cardinality at most $\alephm$ (see Corollary~\ref{cor4.4}).

Applied to the syzygies of the bar resolution, Theorem~\ref{theoA} yields the following statements.

\begin{introtheorem}\label{theoB}
Let $m$ be a nonnegative integer, $k$ be a commutative ring and $G$ be a group of cardinality at most $\alephm$. Then,
\[
\Gcd_kG\;\le\;\PGFcd_kG\;\le\;\Ghd_kG+m+1 .
\]
In particular, every countable group $G$ satisfies $\Gcd_kG\le\PGFcd_kG\le \Ghd_kG+1$.
\end{introtheorem}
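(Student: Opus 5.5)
The plan is to prove the two inequalities separately. The first, $\Gcd_kG\le\PGFcd_kG$, is immediate: PGF modules are Gorenstein projective, so Proposition~\ref{prop2.3} gives $\Gpd_{kG}k\le\PGFdim_{kG}k$. The content lies in the second inequality. We may assume $n=\Ghd_kG<\infty$.

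The first step is a syzygy argument based on the bar resolution. Take the standard (bar) free resolution
\[
\cdots\to P_1\to P_0\to k\to 0
\]
of the trivial $kG$-module $k$. Here $P_i$ is free on the set $G^i$. Since $|G|\le\alephm$, every $P_i$ is a free $kG$-module of rank at most $\alephm$ (or finite). Let $K_n=\ker(P_{n-1}\to P_{n-2})$ be the $n$-th syzygy. Since $\Gfd_{kG}k=n$, the standard characterization of Gorenstein flat dimension over an arbitrary ring shows that $K_n$ is Gorenstein flat. This characterization holds by \v{S}aroch--\v{S}\v{t}ov\'i\v{c}ek, since Gorenstein flat modules are closed under kernels of epimorphisms and form a resolving class. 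It is presumably recorded in the preliminaries.

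The second step shows that $K_n$ is $\alephm$-presented. We have the exact sequence $P_{n+1}\to P_n\to K_n\to 0$, and both $P_{n+1}$ and $P_n$ are free of rank at most $\alephm$. Hence $K_n$ is a quotient of an $\alephm$-generated free module by the image of an $\alephm$-generated module, so it is $\alephm$-presented. (Alternatively, one can invoke Corollary~\ref{cor4.3} directly, which handles modules admitting a projective resolution with $\alephm$-generated terms.) Theorem~\ref{theoA} now gives $\PGFdim_{kG}K_n\le m+1$.

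The third step is dimension shifting. The exact sequence
\[
0\to K_n\to P_{n-1}\to\cdots\to P_0\to k\to 0
\]
has projective, hence PGF, middle terms. Therefore $\PGFdim_{kG}k\le \PGFdim_{kG}K_n+n\le n+m+1$. This uses only the elementary fact that PGF-dimension is subadditive along such sequences: splicing a PGF resolution of $K_n$ of length $m+1$ onto $P_{n-1}\to\cdots\to P_0$ gives a PGF resolution of $k$ of length $n+m+1$. The countable case is $m=0$.

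The only genuinely delicate point is the first step, namely that the $n$-th syzygy in an arbitrary projective resolution of a module of Gorenstein flat dimension $n$ is Gorenstein flat over an arbitrary ring such as $kG$. I would cite the paper's preliminaries, or \v{S}aroch--\v{S}\v{t}ov\'i\v{c}ek's result that Gorenstein flat modules are closed under extensions and kernels of epimorphisms, together with the standard Holm-type argument. Everything else follows formally from Theorem~\ref{theoA}.
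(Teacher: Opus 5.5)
Your proof is correct and follows the paper's argument: the paper applies Corollary~\ref{cor4.3} to the bar resolution, and that corollary's proof consists of your steps. The $n$-th syzygy is shown to be $\alephm$-presented and Gorenstein flat, Theorem~\ref{theoA} bounds its PGF-dimension by $m+1$, and dimension shifting adds $n$. Your presentation step is slightly simpler than the corollary's, because the bar resolution consists of free modules rather than merely projective ones.
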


\begin{introcorollary}\label{corC}
Let $k$ be a commutative ring with $\sfli k<\infty$ and let $G$ be a countable group. Then,
\[
\Ghd_kG\le\Gcd_kG\le\Ghd_kG+1.
\]
In particular, $\Ghd_kG<\infty$ if and only if $\Gcd_kG<\infty$.
\end{introcorollary}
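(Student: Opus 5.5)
The corollary follows from Theorem~\ref{theoB} with $m=0$, together with the known inequality $\Ghd_kG\le\Gcd_kG$ under $\sfli k<\infty$ from \cite[Theorem~3.10]{KS}. I would proceed in three steps.

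\textbf{Upper bound.} A countable group has cardinality at most $\aleph_0$, so Theorem~\ref{theoB} with $m=0$ gives
\[
\Gcd_kG\le\PGFcd_kG\le\Ghd_kG+1 .
\]
No hypothesis on $k$ is used here. This inequality also holds trivially when $\Ghd_kG=\infty$.

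\textbf{Lower bound.} Since $\sfli k<\infty$, \cite[Theorem~3.10]{KS} yields $\Ghd_kG\le\Gcd_kG$ for every group $G$, in particular for our countable one. If one wants to see why the hypothesis is needed, here is the mechanism. A Gorenstein projective $kG$-module $M$ is Gorenstein flat when $\sfli(kG)<\infty$, and $\sfli(kG)\le\sfli k$ holds for group algebras. The point is that a totally acyclic complex of projectives then remains acyclic after tensoring with injectives, because those injectives have finite flat dimension. A Gorenstein projective resolution of length $n$ is therefore a Gorenstein flat resolution of length $n$. I would simply cite this rather than reprove it.

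\textbf{Finiteness equivalence.} Combining the two steps gives $\Ghd_kG\le\Gcd_kG\le\Ghd_kG+1$. The two dimensions are therefore simultaneously finite: if $\Ghd_kG<\infty$ the upper bound makes $\Gcd_kG$ finite, and if $\Gcd_kG<\infty$ the lower bound makes $\Ghd_kG$ finite.

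There is no real obstacle, since all the substance lies in Theorem~\ref{theoB}. The only point to check is that the cited inequality from \cite{KS} is stated under exactly the hypothesis $\sfli k<\infty$ for arbitrary groups, which the introduction confirms.
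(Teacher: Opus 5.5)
Your argument is correct and coincides with the paper's proof: the lower bound is \cite[Theorem~3.10]{KS}, the upper bound is Theorem~\ref{theoB} with $m=0$, and simultaneous finiteness follows at once. One caution about your optional aside: the claim $\sfli(kG)\le\sfli k$ is false in general, since for a field $k$ and $G=\Z^n$ one has $\sfli(kG)\ge\Ghd_kG=n$ by \cite[Corollary~6.2]{KS}; the mechanism you sketch therefore does not work as stated, and it is the citation of \cite{KS}, not that sketch, that supports the lower bound.
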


The first inequality of Corollary~\ref{corC} is \cite[Theorem~3.10]{KS}. Both bounds of the corollary are attained: groups of type $\mathrm{FP}_\infty$ over $k$ with $\sfli k<\infty$ realize the lower one (see \cite[Theorem~B]{St}), while, for every such $k$ and every $n\ge0$, there are countable groups $G$ with $\Ghd_kG=n$ and $\Gcd_kG=n+1$, the simplest being the countably infinite locally finite groups for $n=0$ (see Remark~\ref{rem5.2} and Proposition~\ref{prop5.3}). Thus, for a countable group, the Gorenstein cohomological dimension is determined by the Gorenstein homological dimension up to an error of at most one, and the error does occur. More generally, the bound of Theorem~\ref{theoB} is attained for every $m\ge0$ already for $k=\Z$ and locally finite abelian groups (see Proposition~\ref{prop5.4}).

Two features of Theorem~\ref{theoB} are worth emphasizing. Firstly, the conclusion is obtained for the finer invariant $\PGFcd_kG$, which dominates $\Gcd_kG$; secondly, the cardinality hypothesis concerns only the group $G$. Theorem~\ref{theoB} thus shows that, for groups of cardinality less than $\aleph_\omega$, the finiteness of $\Ghd_kG$ implies that of $\Gcd_kG$ for every commutative ring $k$. The two invariants $\Gcd_kG$ and $\PGFcd_kG$ coincide as soon as $\sfli k<\infty$ (see \cite[Proposition 2.13]{St0}), so that under this mild hypothesis the bound of Theorem~\ref{theoB} is a statement about the Gorenstein cohomological dimension itself. Moreover, whenever $|G|\le\alephm$ and $\Ghd_kG<\infty$, Theorem~\ref{theoB} implies that $\PGFcd_kG<\infty$.
Consequently, \cite[Corollary~13(ii)]{DE} yields
$\Gcd_kG=\PGFcd_kG$.
Thus, for groups satisfying the cardinality hypothesis of
Theorem~\ref{theoB} and having finite Gorenstein homological dimension,
the two invariants coincide over every commutative
coefficient ring. 

Theorem~\ref{theoA} shows that the Gorenstein flat modules of small presentation are close to PGF modules, and hence to Gorenstein projective modules, whereas it is a long-standing open problem whether every Gorenstein projective module is Gorenstein flat. Moreover,
passing from modules to rings, Theorem~\ref{theoA} also yields a Gorenstein version of a classical theorem of Osofsky and Jensen on global dimensions: if every left ideal of a ring $R$ is $\alephm$-generated, for instance if $|R|\le\alephm$, then
\[
\Ggl R\;\le\;\Gwgl R+m+1
\]
(see Corollary~\ref{cor4.5}). Here $\Ggl R$ and $\Gwgl R$ denote the Gorenstein global and the Gorenstein weak global dimension of $R$. For a group algebra $kG$ with $|G|\le\alephm$ and all ideals of $k$ being $\alephm$-generated, as is the case if $|k|\le\alephm$, this gives $\Ggl(kG)\le\sfli(kG)+m+1$ and, in combination with the estimate of $\sfli(kG)$ obtained in \cite{KS},
\[
\silp(kG)\;\le\;\spli(kG)\;\le\;\sfli(kG)+m+1\;\le\;\Ghd_kG+\sfli k+m+1
\]
(see Corollaries~\ref{cor5.6} and~\ref{cor5.7}), a bound for the Gedrich--Gruenberg invariants of the group algebra in terms of the Gorenstein homological dimension of the group.

The paper is organized as follows. Section~\ref{sec:prelim} collects all necessary background in Gorenstein homological algebra and two cardinality lemmas on $\kappa$-presented modules. Section~\ref{sec:step} contains the approximation step (see Proposition~\ref{prop3.1}): an $\alephm$-presented Gorenstein flat module embeds, with $\alephm$-presented Gorenstein flat cokernel, in an $\alephm$-presented flat module, the embedding remaining injective after tensoring with any injective right module. Iterating, one obtains a coresolution by $\alephm$-presented flat modules which, spliced with a projective resolution, gives an $F$-totally acyclic complex. In Section~\ref{sec:cover}, we establish our main technical tool, Lemma~\ref{lem4.1}, which concerns $F$-totally acyclic complexes whose terms have bounded projective dimension. We prove Theorem~\ref{theoA} and discuss its sharpness in Remark~\ref{rem4.2}. We also derive consequences for modules admitting projective resolutions with $\alephm$-generated terms
(see Corollaries~\ref{cor4.3} and~\ref{cor4.4})
and obtain a Gorenstein version of Osofsky's inequality
(see Corollary~\ref{cor4.5}). Section~\ref{sec:groups} is devoted to groups: it contains the proofs of Theorem~\ref{theoB} and Corollary~\ref{corC}, the case of locally finite groups (see Corollary~\ref{cor5.1}), examples showing that the bounds obtained are attained (see Remark~\ref{rem5.2} and Propositions~\ref{prop5.3} and~\ref{prop5.4}), and applications to the invariants $\Ggl(kG)$, $\spli(kG)$ and $\silp(kG)$ of group algebras (see Corollaries~\ref{cor5.6} and \ref{cor5.7}). It also contains a criterion for the simultaneous finiteness of the Gorenstein dimensions of $G$ and of the Gedrich--Gruenberg invariants of $kG$ (see Corollary~\ref{cor5.8}). Finally, Appendix~\ref{sec:appendix} contains a characterization of the modules of PGF-dimension at most $n$ as the direct summands of the \emph{strongly $n$-PGF} modules (see Theorem~\ref{theoA.5}); it extends the description of the PGF modules as direct summands of strongly PGF modules given in \cite{SS}, provides the input for Lemma~\ref{lem4.1}, and is of independent interest.

\section{Preliminaries}\label{sec:prelim}

Throughout this section $R$ denotes an arbitrary associative ring with unit. We recall here the notions of Gorenstein homological algebra that will be used freely in the sequel, and we assemble in Proposition~\ref{prop2.3} the properties of the PGF-dimension on which the arguments of Sections~\ref{sec:step}--\ref{sec:groups} rest. Moreover, we prove in Subsection~\ref{subsec:cardinal} two cardinality lemmas which will be used throughout the paper.

\subsection{Gorenstein projective and Gorenstein flat modules}

Call an acyclic complex $\mathbf P$ of projective $R$-modules \emph{totally acyclic} when the complex $\Hom_R(\mathbf P,Q)$ is again acyclic for every projective $R$-module $Q$. A module is \emph{Gorenstein projective} if it can be realized as a cycle of some totally acyclic complex of projectives; we write $\GProj(R)$ for the class of all such modules. Replacing ``projective'' by ``Gorenstein projective'' in the definition of the projective dimension produces the \emph{Gorenstein projective dimension} $\Gpd_RM$ of a module $M$: it is the smallest integer $n\ge0$ for which some exact sequence
\[
0\rightarrow G_n\rightarrow G_{n-1}\rightarrow\cdots\rightarrow G_0\rightarrow M\rightarrow0
\]
with all $G_i\in\GProj(R)$ exists, and $\Gpd_RM=\infty$ when there is no such $n$. We refer to \cite{EJ,Hol} for a systematic treatment.

The flat side of the theory is obtained by exchanging the functors $\Hom_R(-,Q)$ for the functors $I\otimes_R-$, with $I$ ranging over the injective right $R$-modules. Accordingly, an acyclic complex $\mathbf F$ of flat $R$-modules is said to be \emph{totally acyclic in the flat sense}, or simply \emph{$F$-totally acyclic}, if $I\otimes_R\mathbf F$ is acyclic for every injective right $R$-module $I$; a module is \emph{Gorenstein flat} when it occurs as a cycle of such a complex, and $\GFlat(R)$ denotes the class of Gorenstein flat modules. The \emph{Gorenstein flat dimension} $\Gfd_RM$ is the shortest length of a resolution of $M$ whose terms lie in $\GFlat(R)$, with $\Gfd_RM=\infty$ if $M$ admits no finite such resolution.

Over a general ring the class $\GFlat(R)$ is considerably better behaved than one might expect. Indeed, by the singular compactness theorem of \v{S}aroch--\v{S}\v{t}ov\'i\v{c}ek \cite[Corollary~4.12]{SS}, $\GFlat(R)$ is closed under extensions over \emph{every} ring; in other words, every ring is GF-closed. Consequently, $\GFlat(R)$ is projectively resolving and closed under direct summands \cite{SS,Ben}, so that the Gorenstein flat dimension may be computed by dimension shifting along flat resolutions: if $\Gfd_RM\le n$, then the $n$-th syzygy of any flat resolution of $M$ is Gorenstein flat and $\Tor^R_i(I,M)=0$ for every $i>n$ and every injective right $R$-module $I$ (see \cite[Theorem~2.8]{Ben} and \cite[Lemma~2.4]{Ben} for $n=0$). A further consequence is that the following criterion of Bennis, originally established under a GF-closedness hypothesis, is now available without any restriction on $R$.

\begin{proposition}[{\cite[Theorem~2.3]{Ben}}, {\cite[Corollary~4.12]{SS}}]\label{prop2.1} For every short exact sequence of left $R$-modules $0\rightarrow G_1\rightarrow G_0 \rightarrow M \rightarrow 0,$ where $G_0$ and $G_1$ are Gorenstein flat, if $\Tor_1^R(I,M)=0$ for every injective right $R$-module $I$, then $M$ is Gorenstein flat.
\end{proposition}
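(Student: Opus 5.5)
The plan is to produce an $F$-totally acyclic complex having $M$ as a cycle. I will build its left half from a projective resolution of $M$ and its right half from a coresolution of $M$ by flat modules with Gorenstein flat cokernels.

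First I record the homological information about $M$. The given sequence shows $\Gfd_RM\le1$. By the dimension-shifting facts recalled above (available over every ring thanks to \cite[Corollary~4.12]{SS}), $\Tor_i^R(I,M)=0$ for all $i\ge2$ and every injective right module $I$. Together with the hypothesis, $\Tor_i^R(I,M)=0$ for all $i\ge1$. Consequently, for any projective resolution $\mathbf P\to M$, the complex $I\otimes_R\mathbf P$ is acyclic in positive degrees. Moreover, if $0\to K\to P\to M\to0$ is exact with $P$ projective, then $K$ is Gorenstein flat, since it is the first syzygy of a flat resolution of a module of Gorenstein flat dimension at most $1$.

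Next I embed $M$ into a flat module with Gorenstein flat cokernel, in a way that is preserved by $I\otimes_R-$. Since $K$ is Gorenstein flat, there is an exact sequence $0\to K\to F\to K'\to0$ with $F$ flat and $K'$ Gorenstein flat, which stays exact under $I\otimes_R-$; it is taken from an $F$-totally acyclic complex. Form the pushout of $F\leftarrow K\rightarrow P$. It gives exact sequences $0\to P\to Y\to K'\to0$ and $0\to F\to Y\to M\to0$. The first shows that $Y$ is Gorenstein flat, by closure of $\GFlat(R)$ under extensions. Choose $0\to Y\to F''\to C\to0$ with $F''$ flat, $C$ Gorenstein flat, and exact after applying $I\otimes_R-$. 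Then the composite $F\to F''$ is injective, and there is an exact sequence
\[0\to M\to F''/F\to C\to0.\]

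The key point, and the step needing the most care, is to show that $N=F''/F$ is flat. Clearly $\fd N\le1$, and $\Tor_1^R(I,N)=\ker(I\otimes F\to I\otimes F'')$. This map factors as $I\otimes F\to I\otimes Y\to I\otimes F''$. The first arrow is injective because $\Tor_1^R(I,M)=0$, and the second is injective by the choice of $F''$. Hence $\Tor_1^R(I,N)=0$ for all injective $I$. Now let $X$ be an arbitrary right module and embed it into an injective module $I$. The segment $\Tor_2^R(I/X,N)\to\Tor_1^R(X,N)\to\Tor_1^R(I,N)$ has both outer terms zero, using $\fd N\le1$ on the left. So $\Tor_1^R(X,N)=0$ for every $X$, and $N$ is flat.

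The sequence $0\to M\to N\to C\to0$ stays exact under $I\otimes_R-$, because $\Tor_1^R(I,C)=0$ for the Gorenstein flat module $C$. Here $C$ is Gorenstein flat, so $C$ itself embeds in a flat module with Gorenstein flat cokernel, exact under $I\otimes_R-$, and iterating this gives a flat coresolution of $M$. Splicing this coresolution with the projective resolution of $M$ yields an acyclic complex of flat modules. It remains acyclic after tensoring with any injective $I$: on the right this holds by construction, and on the left by the vanishing $\Tor_{\ge1}^R(I,M)=0$ established above. Hence $M$ is Gorenstein flat.
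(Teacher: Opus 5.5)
Your proof is correct, and it takes a different route from the paper, which gives no argument for this proposition. The paper quotes Bennis's criterion \cite[Theorem~2.3]{Ben}, proved there for GF-closed rings, and removes that hypothesis via \cite[Corollary~4.12]{SS}. Your pushout construction reproves the criterion directly. What the citation hides, and your argument makes visible, is where GF-closedness enters: only in concluding that the pushout $Y$ is Gorenstein flat. The remaining work is elementary, and its engine is your observation that a module $N$ with $\fd_RN\le1$ and $\Tor^R_1(I,N)=0$ for all injective $I$ must be flat.

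One streamlining is worth making. The detour through the syzygy $K$ is unnecessary, and it is the only place where you invoke the recalled dimension-shifting fact that syzygies of modules of Gorenstein flat dimension at most $1$ are Gorenstein flat. The usual proofs of that fact rely on criteria of this very kind, so citing it here risks circularity.

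Run the same construction with $G_1\hookrightarrow G_0$ in place of $K\hookrightarrow P$. Embed $G_1$ in a flat module $F$ with Gorenstein flat cokernel $G_1'$, the sequence remaining exact under $I\otimes_R-$. The pushout of $F\leftarrow G_1\rightarrow G_0$ then gives exact sequences $0\to G_0\to Y\to G_1'\to0$ and $0\to F\to Y\to M\to0$, and everything after that goes through verbatim. Also, $\Tor^R_i(I,M)=0$ for $i\ge2$ follows at once from the long exact sequence of $0\to G_1\to G_0\to M\to0$, with no dimension-shifting theory needed. In this form, closure of Gorenstein flat modules under extensions is the only nontrivial input.

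If you prefer to keep $K$, justify it differently. Pulling back $P\to M$ along $G_0\to M$ exhibits $K$ as the kernel of an epimorphism $G_1\oplus P\to G_0$. So closure under kernels of epimorphisms suffices, and you avoid the general syzygy statement.
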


\subsection{PGF modules}

The two definitions above may be combined: one may retain the projectivity of the terms of the complex, as in the Gorenstein projective case, while imposing the tensor exactness condition, as in the Gorenstein flat case. The resulting class of modules was singled out by \v{S}aroch--\v{S}\v{t}ov\'i\v{c}ek \cite{SS}.

\begin{definition}\label{def2.2} An $R$-module $M$ is \emph{projectively coresolved Gorenstein flat}, abbreviated to \emph{PGF}, if there is an acyclic complex $\mathbf P$ of projective $R$-modules having $M$ among its cycles, such that $I\otimes_R\mathbf P$ is acyclic for every injective right $R$-module $I$ (see \cite{SS}). The class of PGF $R$-modules is denoted by $\PGF(R)$.
\end{definition}

Since projective modules are flat, every PGF module is Gorenstein flat; and every PGF module is Gorenstein projective by \cite[Theorem~4.4]{SS}. Thus
\[
\PGF(R)\;\subseteq\;\GProj(R)\cap\GFlat(R),
\]
and $\PGF(R)$ contains all projective modules. It is not known whether the above inclusion is an equality, nor whether $\PGF(R)=\GProj(R)$ in general; as observed in \cite[Section~4]{SS}, the equality $\PGF(R)=\GProj(R)$ holds if and only if every Gorenstein projective module is Gorenstein flat, which is a long-standing open problem.

The class $\PGF(R)$ can serve as a substitute for $\GProj(R)$ over an arbitrary ring thanks to the following closure properties, all of which are established in \cite{SS}. The class $\PGF(R)$ is projectively resolving and it is closed under arbitrary direct sums and under direct summands. Moreover, if $\PGF(R)^{\perp}$ denotes the class of $R$-modules $L$ such that $\Ext^1_R(X,L)=0$ for every $X\in\PGF(R)$, then the pair $(\PGF(R),\PGF(R)^{\perp})$ is a complete hereditary cotorsion pair in the category of $R$-modules (see \cite[Theorem~4.9]{SS}). No analogous result is known for $\GProj(R)$ over a general ring.
The cotorsion pair above plays a key role in establishing the properties of PGF-dimension used below.

Finally, we note the following observation of \cite[Lemma~2.1]{KS}, which is responsible for part (v) of Proposition~\ref{prop2.3} below: \emph{every PGF module of finite flat dimension is projective}.

\subsection{The PGF-dimension}

The \emph{PGF-dimension} $\PGFdim_RM$ of an $R$-module $M$ is the least length of a resolution of $M$ by PGF modules, that is, the smallest $n\ge0$ for which there is an exact sequence
\[
0\rightarrow X_n\rightarrow X_{n-1}\rightarrow\cdots\rightarrow X_0\rightarrow M\rightarrow0
\]
with $X_0,\dots,X_n\in \PGF(R)$; as usual, $\PGFdim_RM=\infty$ if no finite resolution of this shape exists. Thus $\PGFdim_RM=0$ if and only if $M$ is PGF.

The invariant so defined has the expected properties. The properties collected in the next proposition are the verbatim analogues, with $\PGF(R)$ in the role of $\GProj(R)$, of the familiar properties of the Gorenstein projective dimension \cite{Hol}, and they are established in the same manner. We record here only the statements that will be used in the sequel (see \cite[Propositions~2--5, Theorem~10 and Corollary~13]{DE}); part (v), in the form stated below, rests on \cite[Lemma~2.1]{KS}.

\begin{proposition}\label{prop2.3}
Let $R$ be a ring, let $M$ be an $R$-module and let $n\ge0$ be an integer.
\begin{enumerate}
\item[(i)]We have
$\Gpd_RM\le\PGFdim_RM\le\pd_RM$ and $\Gfd_RM\le\PGFdim_RM$.
\item[(ii)] The following assertions are equivalent:
\begin{enumerate}
\item[(a)] $\PGFdim_RM\le n$;
\item[(b)] there is an exact sequence $0\to K\to X_{n-1}\to\cdots\to X_0\to M\to0$ with all $X_j\in\PGF(R)$ and $K\in\PGF(R)$;
\item[(c)] for \emph{every} exact sequence $0\to K\to X_{n-1}\to\cdots\to X_0\to M\to0$ with all $X_j\in\PGF(R)$, the module $K$ is PGF;
\item[(d)] the $n$-th syzygy of any projective resolution of $M$ is PGF.
\end{enumerate}
\item[(iii)]
$\PGFdim_R\Bigl(\bigoplus_{\lambda\in\Lambda}M_\lambda\Bigr)=\sup_{\lambda\in\Lambda}\PGFdim_RM_\lambda$, for any family $(M_\lambda)_{\lambda\in\Lambda}$ of $R$-modules.
\item[(iv)]If $0\to M'\to M\to M''\to0$ is a short exact sequence of $R$-modules, then
\begin{enumerate}
\item[(a)]$\PGFdim_RM\le\max\{\PGFdim_RM',\PGFdim_RM''\}$,
\item[(b)]$\PGFdim_RM'\le\max\{\PGFdim_RM,\PGFdim_RM''-1\}$,
\item[(c)]$\PGFdim_RM''\le\max\{\PGFdim_RM,\PGFdim_RM'+1\}$.
\end{enumerate}
In particular, if two of the three dimensions are finite, then so is the third.
\item[(v)]If $\fd_RM<\infty$, then $\PGFdim_RM=\pd_RM$. In particular, the PGF-dimension and the projective dimension coincide on modules of finite projective dimension.
\item[(vi)] $\PGFdim_RM\le n$ if and only if there exists a short exact sequence of $R$-modules $0\rightarrow M\rightarrow D\rightarrow G\rightarrow0$, in which $G$ is PGF and $\pd_RD\le n$.
\end{enumerate}
\end{proposition}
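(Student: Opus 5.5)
The natural tool for all six parts is the complete hereditary cotorsion pair $(\PGF(R),\PGF(R)^{\perp})$ of \cite[Theorem~4.9]{SS}, used in the same way the Gorenstein projective class is used in \cite{Hol}. The key reformulation I will aim for is
\[
\PGFdim_RM\le n \iff \Ext^{n+1}_R(M,L)=0 \ \text{for every } L\in\PGF(R)^{\perp}.
\]
Part (i) is immediate from the inclusions $\mathrm{Proj}\subseteq\PGF(R)\subseteq\GProj(R)\cap\GFlat(R)$: a projective resolution is a PGF resolution, and a PGF resolution is both a Gorenstein projective and a Gorenstein flat resolution.

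For (ii), I first note that hereditariness gives $\Ext^i_R(X,L)=0$ for all $i\ge1$, all $X\in\PGF(R)$ and all $L\in\PGF(R)^{\perp}$. Completeness gives $\PGF(R)={}^{\perp}(\PGF(R)^{\perp})$. Take any exact sequence $0\to K\to X_{n-1}\to\cdots\to X_0\to M\to0$ with $X_j\in\PGF(R)$. Dimension shifting then yields $\Ext^1_R(K,L)\cong\Ext^{n+1}_R(M,L)$ for all $L\in\PGF(R)^{\perp}$. Hence $K$ is PGF if and only if $\Ext^{n+1}_R(M,-)$ vanishes on $\PGF(R)^{\perp}$, a condition independent of the chosen sequence. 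This gives (a)$\Leftrightarrow$(b)$\Leftrightarrow$(c), and (d) is the special case of a projective resolution. With the $\Ext$-criterion in hand, (iii) follows from $\Ext^{n+1}_R(\bigoplus_\lambda M_\lambda,L)\cong\prod_\lambda\Ext^{n+1}_R(M_\lambda,L)$. Each inequality in (iv) follows from the long exact $\Ext_R(-,L)$ sequence for $L\in\PGF(R)^{\perp}$.

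For (v), suppose $\fd_RM<\infty$ and $\PGFdim_RM=n<\infty$. The $n$-th syzygy $K$ of a projective resolution of $M$ is PGF by (ii)(d), and $\fd_RK<\infty$, so $K$ is projective by \cite[Lemma~2.1]{KS}. Hence $\pd_RM\le n$, and (i) gives equality. If $\PGFdim_RM=\infty$, then $\pd_RM=\infty$ by (i).

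For (vi), the converse direction is easy. If $0\to M\to D\to G\to0$ with $G$ PGF and $\pd_RD\le n$, then $\PGFdim_RD\le n$, and (iv)(b) gives $\PGFdim_RM\le\max\{n,-1\}=n$. For the forward direction I will use Holm's construction \cite[Theorem~2.10]{Hol}, arguing by induction on $n$.
\begin{itemize}
\item For $n=0$, every PGF module $M$ embeds as $0\to M\to P\to G\to0$ with $P$ projective and $G$ PGF; this is read off from the defining acyclic complex.
\item For $n\ge1$, write $0\to K\to P\to M\to0$ with $P$ projective and $\PGFdim_RK\le n-1$. By induction there is $0\to K\to D'\to G\to0$ with $\pd_RD'\le n-1$ and $G$ PGF. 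Take the pushout of $K\to P$ and $K\to D'$, and call it $E$.
\item The pushout gives exact sequences $0\to P\to E\to G\to0$ and $0\to D'\to E\to M\to0$. The first shows that $E$ is PGF, since $\PGF(R)$ is closed under extensions.
\item Embed $E$ as $0\to E\to Q\to G'\to0$ with $Q$ projective and $G'$ PGF. Then set $D=Q/D'$. From $0\to D'\to Q\to D\to0$ we get $\pd_RD\le n$. The snake/third isomorphism argument gives $0\to M\to D\to G'\to0$.
\end{itemize}

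The main obstacle I expect is the bookkeeping in this last step of (vi), namely checking that the pushout and quotient maps really produce the exact sequence $0\to M\to D\to G'\to0$. The rest reduces cleanly to the $\Ext$-criterion derived from the cotorsion pair.
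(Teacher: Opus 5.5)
Your proposal is correct and follows the route the paper indicates. The paper writes out no proof: it defers to \cite{DE} and to Holm's arguments \cite{Hol}, names the hereditary cotorsion pair $(\PGF(R),\PGF(R)^{\perp})$ as the key input, and says that (v) rests on \cite[Lemma~2.1]{KS}; this is how you handle (ii)--(v), with Holm's pushout construction for (vi). One small correction: the equality $\PGF(R)={}^{\perp}(\PGF(R)^{\perp})$ holds because $(\PGF(R),\PGF(R)^{\perp})$ is a cotorsion pair, not because it is complete, and completeness is not used anywhere in your argument.
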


We shall also use the following sufficient condition for bounded
PGF-dimension, formulated in terms of \emph{strongly $n$-PGF} modules: if a module $N$ fits into a short exact sequence of $R$-modules $0\to N\to X\to N\to0$ with $\pd_RX\le p$ and satisfies $\Tor^R_{p+1}(I,N)=0$ for every injective right $R$-module $I$, then $\PGFdim_RN\le p$ (see Appendix~\ref{sec:appendix}).

\subsection{The invariants \texorpdfstring{$\sfli$}{sfli} and \texorpdfstring{$\Gwgl$}{Gwgl.dim}}

Let $R$ be a ring. Following Gedrich--Gruenberg \cite{GG}, one sets $\spli R=\sup_I\{\pd_RI\}$ and $\silp R=\sup_P\{\id_RP\}$, where $I$ runs over the injective (left) $R$-modules and $P$ over the projective ones; similarly, one sets $\sfli R=\sup_I\{\fd_RI\}$, with $I$ again running over the injective $R$-modules (see \cite{Emm,ET2}). The corresponding invariants of the opposite ring, such as $\sfli R^{\mathrm{op}}$, are defined using right $R$-modules. One defines the \emph{Gorenstein weak global dimension} of $R$ by $\Gwgl R=\sup\{\Gfd_RM:M\ \text{an } R\text{-module}\}$. Similarly, the \emph{Gorenstein global dimension} of $R$ is defined as $\Ggl R$ $=\sup\{\Gpd_RM:M\ \text{an } R\text{-module}\}$, which also equals the supremum of the Gorenstein injective dimensions of all $R$-modules (see \cite[Theorem~1.1]{BM2}), and which is finite if and only if $\spli R$ and $\silp R$ are finite, in which case $\Ggl R=\spli R=\silp R$ (see \cite[Theorem~4.1]{Emm}). By \cite[Theorem~5.3]{Emm}, $\Gwgl R<\infty$ if and only if both $\sfli R<\infty$ and $\sfli R^{\mathrm{op}}<\infty$, in which case $\Gwgl R=\sfli R=\sfli R^{\mathrm{op}}$; moreover, by \cite[Corollary~1.5]{CET} the invariant $\Gwgl$ is left--right symmetric in general. For a ring $R$ such that $R\cong R^{\mathrm{op}}$ (e.g.\ a group algebra $kG$, via $g\mapsto g^{-1}$) one has $\sfli R=\sfli R^{\mathrm{op}}$, and hence $\Gwgl R=\sfli R$, the two sides being finite or infinite simultaneously. Finally, for a commutative ring $k$ one has $\sfli k\le\wgl k\le\gldim k$, so that the hypothesis $\sfli k<\infty$ occurring in Corollary~\ref{corC} is a mild one; it is satisfied, for instance, by $\Z$, by every field, by every ring of finite weak global dimension and by every quasi-Frobenius ring.

\subsection{Two cardinality lemmas}\label{subsec:cardinal}

For an infinite cardinal $\kappa$, a module is \emph{$\kappa$-generated} if it admits a generating set of cardinality at most $\kappa$, and \emph{$\kappa$-presented} if it admits a presentation with at most $\kappa$ generators \emph{and} at most $\kappa$ relations. The following two lemmas are the cardinal counterparts of familiar facts about finitely presented modules.

\begin{lemma}\label{lem2.4}
Let $m$ be a nonnegative integer and $T$ be an $\alephm$-presented flat $R$-module. Then, $\pd_RT\le m+1$.
\end{lemma}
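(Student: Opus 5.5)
This is the classical Jensen--Osofsky theorem. I would prove it by induction on $m$, using Lazard's theorem and a decomposition of $T$ as a continuous well-ordered union.

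\textbf{Base case $m=0$.} Let $T$ be countably presented and flat. By Lazard, $T$ is a direct limit of finitely generated free modules. I would make this limit countable:
\begin{itemize}
\item Fix a countable presentation of $T$.
\item Every finitely presented module mapping to $T$ factors through a free module of finite rank.
\item Since $T$ is countably presented, countably many such factorizations suffice to realize $T$ as a direct limit of a countable directed system.
\item A countable directed set has a cofinal chain, so $T=\varinjlim_n F_n$ with each $F_n$ finitely generated free.
\end{itemize}
The standard telescope sequence
\[
0\to\bigoplus_n F_n\to\bigoplus_n F_n\to T\to0
\]
is then a projective resolution of length $1$, so $\pd_RT\le1$.

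\textbf{Inductive step.} Assume the result for $m-1$, and let $T$ be $\aleph_m$-presented and flat. I would write $T=\bigcup_{\alpha<\omega_m}T_\alpha$ as a continuous chain of submodules. Each $T_\alpha$ should be $\aleph_{m-1}$-presented and flat. In addition, each inclusion $T_\alpha\subseteq T_{\alpha+1}$ should be pure, so that every quotient $T_{\alpha+1}/T_\alpha$ is flat and $\aleph_{m-1}$-presented.

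This filtration is built by a closure argument. Start from a presentation of $T$ by $\aleph_m$ generators and $\aleph_m$ relations. Using flatness (every element of $T$ and every relation factors through a finite free module via Lazard, or via the equational criterion of flatness), repeatedly close subsets of size $<\aleph_m$ under
\begin{itemize}
\item the relations needed, and
\item the witnesses for the equational criterion.
\end{itemize}
After $\omega$ steps one obtains submodules $T_\alpha$ whose quotients $T/T_\alpha$ are flat. By the induction hypothesis, $\pd T_{\alpha+1}/T_\alpha\le m$.

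Auslander's lemma on filtrations then gives
\[
\pd T\le\sup_\alpha \pd\bigl(T_{\alpha+1}/T_\alpha\bigr)+1\le m+1.
\]
Auslander's lemma applies because the chain is continuous and $T_0=0$. Strictly, this inequality holds with no $+1$, since the filtration is indexed by ordinals and the quotients have dimension $\le m$. However, the $T_\alpha$ themselves are only $\aleph_{m-1}$-presented, so I would instead argue as follows:
\begin{itemize}
\item the induction hypothesis gives $\pd T_\alpha\le m$ for every $\alpha$;
\item the quotients are flat and $\aleph_{m-1}$-presented, so they also have $\pd\le m$;
\item Auslander's lemma then yields $\pd T\le m$ on the union, up to the $+1$ that comes from the cofinality $\omega_m$.
\end{itemize}
Concretely, one uses the Osofsky/Jensen form of the argument: a union of a continuous chain of length $\aleph_m$ of modules of $\pd\le m$ with pure inclusions has $\pd\le m+1$.

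\textbf{Main obstacle.} The main difficulty is constructing the filtration with pure (flat-quotient) inclusions while keeping each $T_\alpha$ $\aleph_{m-1}$-presented, not merely $\aleph_{m-1}$-generated. I would handle this by a back-and-forth closure that simultaneously tracks:
\begin{itemize}
\item generators;
\item relations, so that the kernel of the presentation restricted to the subset is generated by the chosen relations;
\item flatness witnesses for elements of $T_\alpha$ relative to $T$, which gives purity.
\end{itemize}
Each closure step adds only $\le\aleph_{m-1}$ elements, so countably many steps stay within size $\aleph_{m-1}$.

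Alternatively, I would simply cite Jensen \cite{Jen} and Osofsky \cite{Oso}, noting that the lemma is exactly their theorem.
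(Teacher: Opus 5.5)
Citing Jensen and Osofsky, as your last line does, is exactly the paper's proof. Your base case ($m=0$, via a countable chain of finitely generated free modules and the telescope sequence) is also correct. The self-contained inductive step, however, is wrong, and in fact it proves too much.

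Suppose $T=\bigcup_{\alpha<\omega_m}T_\alpha$ were a continuous chain of pure, $\aleph_{m-1}$-presented submodules with $T_0=0$. Then each $T_{\alpha+1}/T_\alpha$ would be flat and, by Lemma~\ref{lem2.5}, $\aleph_{m-1}$-presented, hence of projective dimension at most $m$ by induction. Auslander's lemma, which carries no $+1$, would then give $\pd_RT\le m$. This contradicts the sharpness example of Remark~\ref{rem4.2}.

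The filtration genuinely fails to exist there. Let $0\ne N\subsetneq I$ be ideals of a valuation domain with $N$ pure in $I$, and pick $x\in I\setminus N$ and $0\ne y\in N$. Then $N\subseteq Rx$, so $y=rx$ with $r\ne0$. Relative divisibility $rI\cap N=rN$ gives $y=rn$ with $n\in N$, whence $x=n\in N$, a contradiction. So Osofsky's ideal, which is not $\aleph_{m-1}$-generated, has no pure submodules other than $0$ and itself.

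The error sits in your closure count. Witnesses for purity are indexed by finite systems of equations with coefficients in $R$, so the closure is controlled by $|R|$, not by $\aleph_{m-1}$. Your patch ``up to the $+1$ coming from the cofinality $\omega_m$'' is not a valid principle either. The same ideal is the union of an $\omega_m$-chain of principal (free) ideals, yet it has projective dimension $m+1$. A bound of the form $\sup_\alpha\pd_RM_\alpha+1$ for a union $\bigcup_\alpha M_\alpha$ holds in general only for countable chains.

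The classical repair is to filter the relation module instead of $T$. Write $T=F/K$ with $F$ free on a basis $B$ and $K$ generated by a set $S$, where $|B|,|S|\le\alephm$. By Villamayor's criterion, for each finite $s\subseteq S$ there is $\theta_s\colon F\to K$ fixing $s$; fix these maps once and for all. Call a pair $(S',B')$ closed if $B'$ contains the supports of the elements of $S'$, and $\theta_s(b)$ lies in the span of $S'$ whenever $s\subseteq S'$ is finite and $b\in B'$.

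This is a finitary closure whose size is governed by $|S'|+|B'|$ alone. So you obtain a continuous chain of closed pairs $(S_\alpha,B_\alpha)_{\alpha<\omega_m}$, each of size at most $\aleph_{m-1}$, starting from $(\emptyset,\emptyset)$ and exhausting $(S,B)$. Let $K_\alpha$ be the span of $S_\alpha$ and $F_\alpha$ the free module on $B_\alpha$.

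The restrictions $\theta_s|_{F_\alpha}$ show that $F_\alpha/K_\alpha$ is flat, and it is $\aleph_{m-1}$-presented. The induction hypothesis then gives $\pd_RK_\alpha\le m-1$, hence $\pd_R(K_{\alpha+1}/K_\alpha)\le m$. Auslander's lemma now yields $\pd_RK\le m$, i.e.\ $\pd_RT\le m+1$.

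The maps $F_\alpha/K_\alpha\to T$ need not be injective, since $K_\alpha$ may be strictly smaller than $K\cap F_\alpha$. That is exactly why no filtration of $T$ itself is required. The extra $1$ comes from the shift $\pd_R(K_{\alpha+1}/K_\alpha)\le\pd_RK_\alpha+1$, not from cofinality.
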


\begin{proof}
Since $T$ is $\aleph_m$-presented, it admits a presentation with at most
$\aleph_m$ generators and at most $\aleph_m$ relations, i.e.\ with fewer than $\aleph_{m+1}$ generators and relations. By Jensen \cite[Proposition~5.3]{Jen}, every flat module with fewer than $\aleph_{m+1}$ generators and relations has projective dimension at
most $m+1$; see also \cite[Theorem~2.45]{Oso}, where it is shown more generally that a flat module which is the quotient of a free module by an $\alephm$-generated submodule has projective dimension at most $m+1$. Hence $\pd_RT\le m+1$, as needed.
\end{proof}

The following lemma is the cardinal analogue of the standard fact that the quotient of a finitely presented module by a finitely generated submodule is finitely presented.

\begin{lemma}\label{lem2.5} Let $\kappa$ be an infinite cardinal. If $T$ is a $\kappa$-presented
$R$-module and $N\leq T$ is a $\kappa$-generated submodule, then
$T/N$ is $\kappa$-presented.
\end{lemma}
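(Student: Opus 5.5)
We prove Lemma~\ref{lem2.5} by writing down an explicit presentation of $T/N$ and counting generators and relations.

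Since $T$ is $\kappa$-presented, there is an exact sequence
\[
F_1\xrightarrow{\ \partial\ }F_0\xrightarrow{\ \pi\ }T\rightarrow0
\]
with $F_0$ free on a set $X$ and $F_1$ free on a set $Y$, where $|X|\le\kappa$ and $|Y|\le\kappa$. Thus $\ker\pi=\partial(F_1)$ is generated by the images of the elements of $Y$.

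Next, fix a generating set $S$ of $N$ with $|S|\le\kappa$. For each $s\in S$, choose a preimage $\tilde s\in F_0$ with $\pi(\tilde s)=s$.

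Consider the composite surjection $\bar\pi\colon F_0\to T\to T/N$. Its kernel is $\pi^{-1}(N)$. We claim that
\[
\pi^{-1}(N)=\partial(F_1)+\sum_{s\in S}R\tilde s .
\]
The inclusion $\supseteq$ is clear, since $\partial(F_1)=\ker\pi\subseteq\pi^{-1}(N)$ and each $\tilde s$ maps into $N$. For $\subseteq$, let $x\in\pi^{-1}(N)$. Then $\pi(x)\in N$, so $\pi(x)=\sum r_is_i$ for finitely many $s_i\in S$ and $r_i\in R$. The element $x-\sum r_i\tilde s_i$ lies in $\ker\pi=\partial(F_1)$, which proves the claim.

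It follows that $\ker\bar\pi$ is generated by the set $\partial(Y)\cup\{\tilde s:s\in S\}$. This set has cardinality at most $\kappa+\kappa=\kappa$, because $\kappa$ is infinite.

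Let $F_1'$ be the free module on the disjoint union $Y\sqcup S$, and map it to $F_0$ via $\partial$ on $Y$ and via $s\mapsto\tilde s$ on $S$. This gives a presentation
\[
F_1'\rightarrow F_0\xrightarrow{\ \bar\pi\ }T/N\rightarrow0
\]
with at most $\kappa$ generators and at most $\kappa$ relations. Hence $T/N$ is $\kappa$-presented.

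The only step needing care is the description of $\pi^{-1}(N)$, which reduces to the lifting argument above; the rest is cardinal arithmetic, which uses that $\kappa$ is infinite.
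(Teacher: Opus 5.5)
Your proof is correct and follows the paper's argument. You lift a generating set of $N$ of size at most $\kappa$ to $F_0$, note that the kernel of $F_0\to T/N$ is generated by the original relations together with these lifts, and count using $\kappa+\kappa=\kappa$. You also prove the kernel description $\pi^{-1}(N)=\partial(F_1)+\sum_{s\in S}R\tilde s$ via the lifting argument, which the paper simply asserts.
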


\begin{proof}
Since $T$ is $\kappa$-presented, there exists an exact sequence of the form
\[
R^{(J)} \rightarrow R^{(I)}
\xrightarrow{\pi} T \rightarrow 0,
\qquad |I|,|J|\leq \kappa.
\]
Let
$
N=\langle n_\alpha\mid \alpha\in A\rangle$, $|A|\leq \kappa$.
We choose for every $\alpha\in A$ an element $x_\alpha\in R^{(I)}$
such that $\pi(x_\alpha)=n_\alpha$. Then the composite
$
R^{(I)}\xrightarrow{\pi}T\rightarrow T/N
$
is surjective, and its kernel is generated by
$
\im\bigl(R^{(J)}\to R^{(I)}\bigr)$, together with the elements $x_\alpha,\ \alpha\in A$.
Therefore, this kernel is generated by at most
$
|J|+|A|\leq \kappa+\kappa=\kappa
$
elements, since $\kappa$ is infinite. Consequently, $T/N$ admits a presentation
with at most $\kappa$ generators and at most $\kappa$ relations, and hence
$T/N$ is $\kappa$-presented.
\end{proof}

\section{The approximation step}\label{sec:step}

The next statement is the small-cardinality analogue of \cite[Proposition~3.1]{St}, with ``finitely presented'' relaxed to ``$\alephm$-presented'' and ``finitely generated free'' relaxed to ``$\alephm$-presented flat''.
\begin{proposition}\label{prop3.1}
Let $m$ be a nonnegative integer, $R$ be an arbitrary ring and $M$ be an $\alephm$-presented Gorenstein flat $R$-module. Then:
\begin{enumerate}
 \item[(i)] There exists a short exact sequence of $R$-modules
\[
0\rightarrow M\rightarrow T\rightarrow C\rightarrow0,
\]
where $T$ is an $\alephm$-presented flat module (so $\pd_RT\le m+1$), $C$ is an $\alephm$-presented Gorenstein flat module, and the sequence remains exact after applying $I\otimes_R-$ for every injective right $R$-module $I$.

\item[(ii)]There exists an exact sequence of $R$-modules
\[
0\to M\to T_{-1}\to T_{-2}\to\cdots,
\]
where each $T_i$ is an $\alephm$-presented flat module (so $\pd_RT_i\le m+1$) and the images $C_i=\im(T_i\to T_{i-1})$ are $\alephm$-presented Gorenstein flat modules.
\item[(iii)] Every projective resolution $\mathbf P$ of $M$
\[\mathbf P:\ \cdots \rightarrow P_{j} \rightarrow P_{j-1}\rightarrow \cdots \rightarrow P_0 \rightarrow M \rightarrow 0\]
can be completed to an $F$-totally acyclic complex $\mathbf F$ of flat modules
\[\mathbf F:\ \cdots\rightarrow P_{j} \rightarrow P_{j-1}\rightarrow \cdots \rightarrow P_0\to T_{-1}\to T_{-2}\to\cdots\] with $M=\im(P_0\to T_{-1})$, where $T_i$ is an $\alephm$-presented flat $R$-module (so $\pd_RT_i\le m+1$) for every $i\le-1$.
\end{enumerate}
\end{proposition}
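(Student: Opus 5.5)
We prove (i) first; (ii) and (iii) then follow formally. Since $M$ is Gorenstein flat, it is a cycle of an $F$-totally acyclic complex of flat modules. In particular there is a short exact sequence $0\to M\to F\to M'\to0$ with $F$ flat and $M'$ Gorenstein flat, which stays exact under $I\otimes_R-$ for every injective right module $I$. The task is to shrink $F$ to an $\alephm$-presented flat submodule $T\supseteq M$ while keeping the cokernel Gorenstein flat and the $I\otimes$-exactness.

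We may take $F$ free; this is possible because $\PGF(R)$-approximations, or the fact that flat preenvelopes of Gorenstein flat modules can be chosen as monomorphisms into free-ish modules, let us replace $F$ by a flat module built from a free one. To avoid this issue altogether we instead use the following standard fact. For flat $F$, every subset $S\subseteq F$ of cardinality at most $\aleph_m$ is contained in an $\alephm$-presented (indeed $\aleph_m$-generated) pure submodule $F'\le F$. This is the Kaplansky/Bican--El Bashir--Enochs type deconstruction of flat modules: $F$ is $\aleph_m$-filtered by flat modules, or equivalently a closing-off argument. Pure submodules of flat modules are flat, and an $\aleph_m$-generated flat module is in fact $\aleph_m$-presented, since its relation module is again small by the closure argument.

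We then run a countable back-and-forth closing-off to force $F'\cap M$ and the image condition to behave. Let $X_0$ be a generating set of $M$ of size at most $\aleph_m$. Take $T$ to be an $\alephm$-presented pure submodule of $F$ containing $M$. Then $M\le T$ and $T/M\le F/M=M'$. Since $T$ is pure in $F$, $T/M\to F/M$ is a pure monomorphism. Pure submodules are preserved by $I\otimes-$, so $I\otimes T\to I\otimes F$ is injective. Composing with the injectivity of $I\otimes M\to I\otimes F$ gives that $I\otimes M\to I\otimes T$ is injective, that is, $\Tor_1(I,C)$ computations work out. Concretely, from $0\to M\to T\to C\to 0$ with $T$ flat, $\Tor_1(I,C)=\ker(I\otimes M\to I\otimes T)=0$. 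Since $C=T/M$ and $M$ is $\aleph_m$-generated, Lemma~\ref{lem2.5} shows that $C$ is $\alephm$-presented. Now $C$ is a quotient of two Gorenstein flat modules ($M$ and the flat $T$) with $\Tor_1(I,C)=0$ for all injective $I$, so Proposition~\ref{prop2.1} shows that $C$ is Gorenstein flat. This proves (i); note that purity of $T$ was really only needed to obtain injectivity of $I\otimes M\to I\otimes T$. Lemma~\ref{lem2.4} gives $\pd_RT\le m+1$.

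For (ii), iterate (i) on $C$, which again satisfies the hypotheses, and splice the resulting sequences. For (iii), splice $\mathbf P$ with the coresolution from (ii). The result is acyclic with flat terms. For each injective $I$, tensoring stays exact at the coresolution part because each short sequence there is $I\otimes$-exact by construction. At the resolution part, $\Tor_i(I,M)=0$ for $i\ge1$ since $M$ is Gorenstein flat, and this gives exactness of $I\otimes\mathbf P$. At the junction, exactness follows from injectivity of $I\otimes M\to I\otimes T_{-1}$.

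The main obstacle is the existence of an $\alephm$-presented pure (flat) submodule of $F$ containing the $\aleph_m$-generated $M$. This is the standard fact that every flat module is the $\aleph_m$-directed union of $\le\aleph_m$-presented pure submodules; it follows from Lazard's theorem via a L\"owenheim--Skolem closing-off, and we invoke it as known.
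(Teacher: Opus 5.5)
There is a genuine gap, and it sits exactly where you placed the main obstacle. Your proof of (i) rests on two claims you call standard. The first is that every subset of size at most $\aleph_m$ of a flat module lies in an $\alephm$-presented pure submodule. The second is that $\aleph_m$-generated flat modules are $\alephm$-presented. The L\"owenheim--Skolem (Bican--El Bashir--Enochs) closing-off only bounds the \emph{cardinality} of the pure submodule, by $|R|+\aleph_m$. So it yields $\alephm$-presented submodules only when $|R|\le\aleph_m$, which is precisely the case of Remark~\ref{rem4.6} that the proposition is meant to go beyond.

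For general $R$ both claims are false. Take a field $k$, a set $X$ with $|X|=\aleph_{m+1}$, $R=k^X$ and $\mathfrak a=k^{(X)}$. The ring $R$ is von Neumann regular, so every module is flat. The cyclic module $F=R/\mathfrak a$ is flat but not $\alephm$-presented, because $\mathfrak a$ is not $\aleph_m$-generated. Yet the only submodule of $F$ containing $\bar 1$ is $F$ itself.

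A telling symptom is that you never use that $M$ has at most $\aleph_m$ relations, only that it is $\aleph_m$-generated. In that weaker form (i) is false. Over a von Neumann regular ring of global dimension at least $2$ (for instance $\prod_{\mathbb N}k$, by a theorem of Osofsky), Auslander's theorem gives a cyclic, hence flat, module $M$ with $\pd_RM\ge2$. A sequence $0\to M\to T\to C\to0$ as in (i) with $m=0$ would have $T$ and $C$ countably presented and flat. Lemma~\ref{lem2.4} would then force $\pd_RM\le\max\{\pd_RT,\pd_RC-1\}\le1$, a contradiction. Your preliminary idea of taking $F$ free is also unavailable, since $\Q$ embeds in no free abelian group.

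The missing idea is that $T$ need not be a submodule of $F$ at all.
\begin{itemize}
\item Write $F=\varinjlim_{j\in J}L_j$ with the $L_j$ finitely generated free (Govorov--Lazard).
\item Lift the images under $u\colon M\to F$ of the at most $\aleph_m$ generators $x_s$ of $M$ to elements $y_s\in L_{j_s}$.
\item Close off to a directed subset $J_0\subseteq J$ with $|J_0|\le\aleph_m$ such that each of the at most $\aleph_m$ defining relations of $M$ already vanishes on the $y_s$ at some index of $J_0$. This is exactly where the bound on the relations enters.
\end{itemize}
Then $T=\varinjlim_{j\in J_0}L_j$ is flat and $\alephm$-presented, and $u$ factors as $M\xrightarrow{g}T\xrightarrow{\iota}F$. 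The map $\iota$ need not be injective, but $g$ is injective because $u$ is. Likewise $1_I\otimes g$ is injective because $1_I\otimes u=(1_I\otimes\iota)\circ(1_I\otimes g)$ is.

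As you noted yourself, this injectivity is all your $\Tor_1$ computation needs. Even in your set-up it comes from the factorization through $F$, not from purity; purity was really only being used to make $T$ flat. From that point on your argument agrees with the paper's and is correct: Lemma~\ref{lem2.5} for $C$, Proposition~\ref{prop2.1} for its Gorenstein flatness, and the splicing in (ii) and (iii).
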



\begin{proof}

(i) Let $\kappa=\aleph_m$. Since $M$ is Gorenstein flat, there exists a
short exact sequence of $R$-modules $0\rightarrow M \xrightarrow{\,u\,} F_0 \rightarrow M'
\rightarrow 0,$
where $F_0$ is flat and $M'$ is Gorenstein flat. Moreover,
$
\Tor_1^R(I,M')=0
$
for every injective right $R$-module $I$ and hence the map
$
1_I\otimes u:I\otimes_R M\rightarrow I\otimes_R F_0$
is injective for every such $I$. By the Govorov--Lazard theorem, we may write $F_0=\varinjlim_{j\in J}L_j,$
where $(J,\leq)$ is directed and every $R$-module $L_j$ is finitely generated and
free. Let $\omega_j:L_j\rightarrow F_0$
be the canonical maps and
$\varphi_{jj'}:L_j\rightarrow L_{j'}$,
$j\leq j'$, the transition homomorphisms.
Since $M$ is $\kappa$-presented, we may fix a presentation $R^{(U)}\rightarrow R^{(S)}
\xrightarrow{\pi}M\rightarrow 0$
with
$|S|,|U|\leq\kappa.$
Writing $x_s=\pi(e_s)$, we may identify the $R$-module
$
M$ with the quotient $R^{(S)}/N,$
where $N$ is generated by the relations $r_t=\sum_{s\in S_t}a_{t,s}e_s$, $t\in U$, with each $S_t$ a finite subset of $S$. For every $s\in S$, we choose $j_s\in J$ and $y_s\in L_{j_s}$ such that
$\omega_{j_s}(y_s)=u(x_s).$
We shall construct a directed subset $J_0\subseteq J$ of cardinality
at most $\kappa$ such that the elements $y_s$ and all the defining
relations of $M$ are already realized in the restricted system
indexed by $J_0$.
We start with $J^{(0)}=\{j_s\mid s\in S\}.$
Inductively, we enlarge the set constructed so far by performing the
following two operations. Firstly, for every pair of indices already
chosen, we adjoin an upper bound in $J$. Secondly, for every relation
$r_t=\sum_{s\in S_t}a_{t,s}e_s,$ we
choose an index $j$ dominating all $j_s$, $s\in S_t$, and consider the element
$z_t=
\sum_{s\in S_t}
a_{t,s}\varphi_{j_sj}(y_s)\in L_j.$
Since
$
\omega_j(z_t)
=
u\left(\sum_{s\in S_t}a_{t,s}x_s\right)
=0,$
the defining property of a directed colimit yields some $j'\geq j$
such that
$
\varphi_{jj'}(z_t)=0.$ We adjoin both $j$ and $j'$. At each stage at most $\kappa$ indices are added, since
$\kappa^2=\kappa$. Repeating this construction over $\omega$ steps and
putting $J_0=\bigcup_{n<\omega}J^{(n)},$
we obtain a directed subset $J_0\subseteq J$ satisfying
$
|J_0|\leq\kappa.$
Let
$
T=\varinjlim_{j\in J_0}L_j$ and denote by
$
\lambda_j:L_j\rightarrow T
$
the structural maps of this direct colimit. The inclusion $J_0\subseteq J$ induces a
canonical homomorphism $\iota:T\rightarrow F_0$
such that $\iota \circ \lambda_j=\omega_j.$ For $s\in S$, we set
$
\widetilde y_s=\lambda_{j_s}(y_s)\in T.$
By construction, all the defining relations of $M$ vanish on the
family $(\widetilde y_s)_{s\in S}$. Hence the assignment
$x_s\mapsto \widetilde y_s$
induces a well-defined homomorphism $g:M\rightarrow T.$
Moreover, $\iota \circ g=u.$
Since $u$ is injective, it follows that $g$ is injective as well. Thus, setting
$C=\coker(g),$
we obtain a short exact sequence of $R$-modules
\begin{equation}\label{eq1}
	0\rightarrow M\xrightarrow{\,g\,}T\rightarrow C
\rightarrow 0.
\end{equation}
Since $T$ is a directed colimit of finitely generated free modules, it is flat. We will next show that $T$ is $\kappa$-presented. The standard presentation of a directed colimit gives an exact sequence of $R$-modules
$$\bigoplus_{\substack{j,j'\in J_0\\ j\leq j'}}L_j
\rightarrow
\bigoplus_{j\in J_0}L_j
\rightarrow T\rightarrow 0.$$
As $|J_0|\leq\kappa$, there are at most $\kappa$ comparable pairs
$(j,j')$, and each $L_j$ is finitely generated and free. Hence both free modules in the above presentation have rank at most $\kappa$, and
therefore $T$ is $\kappa$-presented. Using Lemma~\ref{lem2.4}, we deduce that
$\pd_R T\leq m+1.$ Furthermore, $C$ is $\kappa$-presented. Indeed, Lemma~\ref{lem2.5} applies since $T$ is
$\kappa$-presented and $g(M)$ is generated by at most $\kappa$
elements.
It remains to show that the exact sequence \eqref{eq1} remains exact after applying the functor $I\otimes_R -$, for every injective right $R$-module $I$. Let $I$ be an
injective right $R$-module. Since
$u=\iota \circ g,$
we have
$
1_I\otimes u
=
(1_I\otimes\iota)\circ(1_I\otimes g).$
Moreover $1_I\otimes u$ is injective and hence $1_I\otimes g$ is injective as well. We consider now the long exact Tor sequence \begin{equation*} \cdots \rightarrow \Tor_1^R(I,T)\rightarrow \Tor_1^R(I,C)\rightarrow I\otimes_RM\xrightarrow{1_I \otimes g}I\otimes_RT\rightarrow I\otimes_RC\rightarrow 0.\end{equation*} Since $T$ is flat, we get the exact sequence
\begin{equation*}
	0\rightarrow \Tor_1^R(I,C)\rightarrow I\otimes_RM\xrightarrow{1_I \otimes g}I\otimes_RT\rightarrow I\otimes_RC\rightarrow 0.\end{equation*} The injectivity of $1_I \otimes g$ yields $\Tor_1^R(I,C)=0$ and hence the sequence $0\rightarrow I\otimes_RM\rightarrow I\otimes_RT\rightarrow I\otimes_RC\rightarrow 0$ is exact
for every injective right $R$-module $I$.  Moreover, $T$ is flat and $M$ is Gorenstein flat and hence the short exact sequence \eqref{eq1} and Proposition~\ref{prop2.1} yield that the $R$-module $C$ is Gorenstein flat, as needed.

(ii) This follows inductively from (i), splicing the short exact sequences obtained by a repeated application of (i). We note that the module $C$ of each step is an $\alephm$-presented Gorenstein flat module, so the step can be repeated.

(iii) We splice the projective resolution $\mathbf P$ of $M$ with the exact sequence of (ii). The resulting complex is an acyclic complex of flat modules \[\mathbf F:\ \cdots\rightarrow P_{j} \rightarrow P_{j-1}\rightarrow \cdots \rightarrow P_0\to T_{-1}\to T_{-2}\to\cdots\] with $M=\im(P_0\to T_{-1})$, such that $T_i$ is an $\alephm$-presented flat $R$-module for every $i\le-1$. We denote by $F_i$ the terms of $\mathbf F$ (so $F_i=P_i$ for $i\ge0$ and $F_i=T_i$ for $i\le-1$) and by $Z_i=\ker(F_i\to F_{i-1})$, $i\in\mathbb{Z}$, its syzygies, so that there are short exact sequences $0\to Z_i\to F_i\to Z_{i-1}\to0$ for all $i\in\mathbb{Z}$. For $j\ge1$, let $K_j$ be the $j$-th syzygy of $M$ in $\mathbf P$, and set $K_0=M$. Then $Z_{i-1}=K_i$ for every $i\ge0$ (in particular, $Z_{-1}=M$), whereas $Z_{i-1}=C_i=\im(T_i\to T_{i-1})$ for every $i\le-1$. Since $M$ is Gorenstein flat, the $R$-module $K_j$ is Gorenstein flat for every $j\geq 0$, and hence $\Tor_1^R(I,K_j)=0$ for every injective right $R$-module $I$ (see Section~\ref{sec:prelim}). The modules $C_i$ are Gorenstein flat by (ii), and hence $\Tor_1^R(I,C_i)=0$ as well. Therefore, $\Tor_1^R(I,Z_{i-1})=0$ for every $i\in\mathbb{Z}$, and so every short exact sequence $0\to Z_i\to F_i\to Z_{i-1}\to0$ remains exact after the application of the functor $I\otimes_R-$. It follows that $\mathbf F$ is $F$-totally acyclic, as needed.
\end{proof}

\section{\texorpdfstring{$F$}{F}-totally acyclic complexes and the proof of Theorem~\texorpdfstring{\ref{theoA}}{A}}\label{sec:cover}

In this section we prove Theorem~\ref{theoA}, we record the form in which it will be applied to groups, and we deduce from it a Gorenstein counterpart of the Osofsky--Jensen inequality for the global dimension of a ring.

\subsection{\texorpdfstring{The proof of Theorem~\ref{theoA}}{The proof of Theorem A}}

The following lemma is the main technical tool of the paper.
It shows that a bound on the projective dimensions of the terms
of an $F$-totally acyclic complex of flat modules yields a bound
on the PGF-dimensions of its syzygies. The idea is to sum all the syzygies into a single module, which is then periodic modulo a module of finite projective dimension, and to apply the results of Appendix~\ref{sec:appendix}.

\begin{lemma}\label{lem4.1}
Let $R$ be a ring, $p$ be a nonnegative integer and $\mathbf F$ be an $F$-totally acyclic complex of flat $R$-modules $F_i$ such that $\pd_RF_i\le p$ for every $i\in \mathbb{Z}$. Then, every syzygy module $Z_i$ of the complex $\mathbf F$ satisfies $\PGFdim_RZ_i\le p$, $i\in\mathbb{Z}$. Moreover, for every $i\in\mathbb{Z}$, there exists an exact sequence
\[0\rightarrow X\rightarrow Q_{p-1}\rightarrow\cdots\rightarrow Q_0\rightarrow Z_i\rightarrow0
\]
in which $X$ is PGF and each $Q_j$ is projective.
\end{lemma}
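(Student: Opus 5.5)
Following the idea announced before the statement, I will sum all the syzygies into one module and use the criterion for strongly $n$-PGF modules recalled at the end of Subsection~2.3 (proved in Appendix~\ref{sec:appendix}).

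Set $N=\bigoplus_{i\in\mathbb Z}Z_i$ and $X=\bigoplus_{i\in\mathbb Z}F_i$. The short exact sequences $0\to Z_i\to F_i\to Z_{i-1}\to0$ sum to an exact sequence $0\to\bigoplus_i Z_i\to\bigoplus_i F_i\to\bigoplus_i Z_{i-1}\to0$. Reindexing the right-hand sum gives
\[
0\to N\to X\to N\to 0 .
\]
Projective dimension commutes with direct sums (a direct sum of projective resolutions is one), so $\pd_RX\le p$.

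Next I check the Tor condition. For an injective right module $I$, the complex $I\otimes_R\mathbf F$ is acyclic, so each $0\to I\otimes Z_i\to I\otimes F_i\to I\otimes Z_{i-1}\to0$ is exact. Since $F_i$ is flat, this gives $\Tor_1^R(I,Z_{i-1})=0$ for every $i$, and hence $\Tor_1^R(I,Z_i)=0$ for every $i$. Flatness of the $F_i$ also gives dimension shifting, $\Tor_{k+1}^R(I,Z_{i-1})\cong\Tor_k^R(I,Z_i)$ for $k\ge1$. Iterating, $\Tor_{j}^R(I,Z_i)\cong\Tor_1^R(I,Z_{i+j-1})=0$ for all $j\ge1$. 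Tor commutes with direct sums, so $\Tor_{p+1}^R(I,N)=0$. If $p=0$ this is still fine, since then $\Tor_1^R(I,N)=0$.

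By the appendix criterion, $\PGFdim_RN\le p$. Proposition~\ref{prop2.3}(iii) then gives $\PGFdim_RZ_i\le\PGFdim_RN\le p$ for each $i$.

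For the second statement, take any projective resolution of $Z_i$ and truncate it at stage $p$:
\[
0\to X\to Q_{p-1}\to\cdots\to Q_0\to Z_i\to0 .
\]
Here $X$ is the $p$-th syzygy, which is PGF by Proposition~\ref{prop2.3}(ii)(d). For $p=0$ the sequence reads $0\to X\to Z_i\to 0$, which is consistent since $Z_i$ is then PGF.

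The main obstacle is the appendix result itself, namely that a strongly $p$-PGF module has PGF-dimension at most $p$. Since the paper states it as available, the argument here reduces to the bookkeeping above. The only delicate point is making sure that the Tor vanishing in all degrees $\ge1$ (not just degree $p+1$) is justified, and that is exactly where $F$-total acyclicity and flatness of the terms are used.
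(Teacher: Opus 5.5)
Your proof is correct and follows the paper's argument: you sum the sequences $0\to Z_i\to F_i\to Z_{i-1}\to0$ to get a strongly $p$-PGF module $N=\bigoplus_i Z_i$, apply Proposition~\ref{propA.4}(i), and then use Proposition~\ref{prop2.3}(iii) and (ii)(d). The only difference is that you derive the vanishing of $\Tor^R_j(I,Z_i)$ directly, from $F$-total acyclicity, flatness of the $F_i$ and dimension shifting, whereas the paper cites Bennis's lemma that Gorenstein flat modules have this property.
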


\begin{proof}We consider the short exact sequences
\begin{equation}\label{eq2}0\to Z_i\to F_i\to Z_{i-1}\to0,\end{equation} for every $i\in\mathbb{Z}$. Summing the exact sequences \eqref{eq2} over $i\in\Z$, we deduce the exact sequence of $R$-modules
\begin{equation}\label{eq3}0\rightarrow \bigoplus_{i\in \mathbb{Z}} Z_i\rightarrow \bigoplus_{i\in \mathbb{Z}}F_i\rightarrow  \bigoplus_{i\in \mathbb{Z}} Z_{i-1}\rightarrow 0.\end{equation}
Using the reindexing $\bigoplus_iZ_{i-1}=\bigoplus_iZ_i=:N$ the short exact sequence \eqref{eq3} takes the form $0\rightarrow N\rightarrow Y\rightarrow N\rightarrow0,$ where
$Y:=\bigoplus_{i\in \mathbb{Z}}F_i$ and $\pd_RY=\sup_i\pd_RF_i\le p$. Every syzygy module $Z_i$ is, by its definition, a cycle of the $F$-totally acyclic complex $\mathbf F$, and hence a Gorenstein flat $R$-module. Consequently, $\Tor^R_j(I,Z_i)=0$ for every $i\in\mathbb Z$, every $j>0$ and every injective right $R$-module $I$ (see \cite[Lemma~2.4]{Ben}, which is valid over an
arbitrary ring). It follows that $\Tor_j^R(I,N)\cong \bigoplus_{i\in\mathbb{Z}}\Tor_j^R(I,Z_{i})=0$ for every  $j>0$ and every injective right $R$-module $I$. Therefore, the $R$-module $N$ is \emph{strongly $p$-PGF} in the sense of Definition~\ref{defA.1}, and hence $\PGFdim_RN\le p$ by Proposition~\ref{propA.4}(i). We conclude that $\PGFdim_RZ_i\le p$ for every $i$, as the PGF-dimension of a direct sum is the supremum of the PGF-dimensions of its direct summands (see Proposition~\ref{prop2.3}(iii)). The final assertion of the lemma then follows from Proposition~\ref{prop2.3}(ii), applied to a partial projective resolution of $Z_i$ of length $p$.
\end{proof}


\begin{proof}[Proof of Theorem~\ref{theoA}]
 Let $M$ be an $\alephm$-presented Gorenstein flat module over a ring $R$. Invoking Proposition~\ref{prop3.1}(iii), we obtain an $F$-totally acyclic complex of flat $R$-modules
\[
\mathbf F:\ \cdots\rightarrow P_1\rightarrow P_0\rightarrow T_{-1}\rightarrow T_{-2}\rightarrow\cdots,
\qquad M=\im(P_0\to T_{-1}),
\] where $P_i$ is projective for every $i\geq 0$ and $\pd_RT_j\leq m+1$ for every $j\le-1$. Then, Lemma~\ref{lem4.1} with $p=m+1$ yields $\PGFdim_RM\le m+1$, as needed.
\end{proof}


\begin{remark}\label{rem4.2}The bound in Theorem~\ref{theoA} is sharp for every nonnegative
integer $m$, even for flat modules. Indeed, the construction
in \cite[proof of Proposition~2.62]{Oso}, with
$n=m+2$, provides a valuation domain $R$ and an ideal
$I=\bigcup_{\alpha<\omega_m}I_\alpha$,
where $\omega_m$ is the initial ordinal of cardinality
$\aleph_m$ and $(I_\alpha)_{\alpha<\omega_m}$ is a strictly
increasing chain of nonzero principal ideals.
By \cite[Theorem~2.57]{Oso}, this ideal satisfies
$\pd_RI=m+1$.
Each $I_\alpha$ is free of rank one, so $I$ is flat.
Moreover, the standard presentation of this direct limit
uses at most $\aleph_m$ generators and relations.
Thus $I$ is an $\aleph_m$-presented flat module, and hence
an $\aleph_m$-presented Gorenstein flat module.
Proposition~\ref{prop2.3}(v) now gives
$\PGFdim_RI
=\pd_RI
=m+1$,
showing that the bound is attained.
For $m=0$, one may simply take $\mathbb Q$, which is a
countably presented flat $\mathbb Z$-module satisfying
$\PGFdim_\Z\Q
=\Gpd_\Z\Q
=\pd_\Z\Q
=1$.
\end{remark}

Theorem~\ref{theoA} concerns modules of small presentation. By dimension shifting it propagates to modules of finite Gorenstein flat dimension admitting a projective resolution with small terms; this is the form in which it will be applied to groups.

\begin{corollary}\label{cor4.3}
Let $R$ be a ring, $m,d\ge0$ be nonnegative integers and $M$ be an $R$-module such that $\Gfd_RM\le d$. Assume that there exists an exact sequence of $R$-modules
\[
P_{d+1}\rightarrow P_d\rightarrow P_{d-1}\rightarrow\cdots\rightarrow P_0\rightarrow M\rightarrow0
\]
with all $P_j$ projective and $P_d$, $P_{d+1}$ $\alephm$-generated. Then, $\PGFdim_RM\le d+m+1$. In particular, $\Gpd_RM\le d+m+1$.
\end{corollary}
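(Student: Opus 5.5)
The approach is to reduce to Theorem~\ref{theoA} by dimension shifting along the given partial projective resolution. Let $K_d=\ker(P_{d-1}\to P_{d-2})$ be the $d$-th syzygy of $M$, with the conventions $P_{-1}=M$ and $K_0=M$ when $d=0$. Since $\Gfd_RM\le d$, the discussion in Section~\ref{sec:prelim} (dimension shifting for Gorenstein flat dimension, valid over every ring because $\GFlat(R)$ is projectively resolving and closed under direct summands) shows that $K_d$ is Gorenstein flat. Here we use that the $P_j$ are projective, hence flat, so that $0\to K_d\to P_{d-1}\to\cdots\to P_0\to M\to0$ is the beginning of a flat resolution.

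Next I will check that $K_d$ is $\alephm$-presented. By exactness at $P_d$ we have $K_d=\coker(P_{d+1}\to P_d)$. Each of $P_{d+1}$ and $P_d$ is an $\alephm$-generated projective module, so it is a direct summand of a free module $R^{(S)}$ with $|S|\le\alephm$. Composing $R^{(S_{d+1})}\twoheadrightarrow P_{d+1}\to P_d$ and $R^{(S_d)}\twoheadrightarrow P_d$, we see that $K_d$ is the quotient of $R^{(S_d)}$ by a submodule generated by at most $|S_{d+1}|+|S_d|\le\alephm$ elements: the image of $P_{d+1}$ together with generators of the kernel of $R^{(S_d)}\to P_d$. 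This kernel is a direct summand of $R^{(S_d)}$, hence $\alephm$-generated. Alternatively, one may apply Lemma~\ref{lem2.5} to $T=R^{(S_d)}$ and the $\alephm$-generated preimage of the image of $P_{d+1}$. In either case $K_d$ is $\alephm$-presented, and Theorem~\ref{theoA} gives $\PGFdim_RK_d\le m+1$.

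Finally, to shift back up, I will use the short exact sequences $0\to K_{j+1}\to P_j\to K_j\to0$ for $0\le j\le d-1$, where the $P_j$ are projective and hence PGF of dimension $0$. Proposition~\ref{prop2.3}(iv)(c) gives $\PGFdim_RK_j\le\max\{0,\PGFdim_RK_{j+1}+1\}$. Inducting downward from $j=d-1$ yields $\PGFdim_RM=\PGFdim_RK_0\le m+1+d$. Alternatively, one can splice a PGF resolution of $K_d$ of length $m+1$ (Proposition~\ref{prop2.3}(ii)) with $P_{d-1}\to\cdots\to P_0$. The statement about $\Gpd_RM$ then follows from Proposition~\ref{prop2.3}(i).

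The only step requiring any care is the presentation count for $K_d$. One must handle projective rather than free $P_d$ by passing to a free module of the same generator bound and absorbing the complementary summand's generators into the relations. Everything else is formal.
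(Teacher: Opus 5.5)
Your proposal is correct and follows the paper's proof essentially step for step. The paper likewise shows $K_d=\coker(P_{d+1}\to P_d)$ is $\alephm$-presented via a free cover of $P_d$ whose kernel is a direct summand, gets Gorenstein flatness of $K_d$ by dimension shifting, applies Theorem~\ref{theoA}, and shifts back up; where you use Proposition~\ref{prop2.3}(iv)(c) inductively, the paper cites \cite[Lemma~1.2]{St0}, which gives the same estimate.
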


\begin{proof}Let $\delta_{d+1}:P_{d+1}\rightarrow P_d$ denote the differential of the exact sequence and let $K_d=\coker\delta_{d+1}$.
We claim that $K_d$ is $\aleph_m$-presented. Since $P_d$ is $\aleph_m$-generated, there exists an epimorphism $\pi:F\twoheadrightarrow P_d$, where $F$ is a free $R$-module of rank at most $\aleph_m$.
As $P_d$ is projective, the epimorphism $\pi$ splits. Hence $F\cong \ker\pi\oplus P_d$. Therefore, $\ker\pi$ is a direct summand of the
$\aleph_m$-generated module $F$, and is consequently itself $\aleph_m$-generated. We consider now the canonical quotient map $q:P_d\twoheadrightarrow K_d$. Then, the map $q\circ\pi:F\twoheadrightarrow K_d$ is an epimorphism. To show that the $R$-module $K_d$ is $\aleph_m$-presented, it is enough to prove that $\ker(q\circ\pi)$ is $\aleph_m$-generated. Since $\ker q=\im\delta_{d+1}$,
we have $\ker(q\circ\pi)=\pi^{-1}\!\bigl(\im\delta_{d+1}\bigr)$. Moreover, the $R$-module $P_{d+1}$ is $\aleph_m$-generated and hence its image in $P_d$
is also $\aleph_m$-generated. Thus, we can choose a generating family $\{y_\alpha\}_{\alpha\in A}
\subseteq \im\delta_{d+1}$, $|A|\leq\aleph_m$,
and, for each $\alpha\in A$, we choose a lift $\widetilde y_\alpha\in F$ such that $\pi(\widetilde y_\alpha)=y_\alpha.$ Then,
\begin{equation}\label{eq4}
	\ker(q\circ\pi)=\ker\pi+\sum_{\alpha\in A}R\widetilde y_\alpha.\end{equation}
Indeed, the inclusion from the right side to the left in \eqref{eq4} is immediate. Conversely, if $x\in\ker(q\circ\pi)$, then $\pi(x)\in\im\delta_{d+1}$, so there exist $r_1,\dots,r_t\in R$ and $\alpha_1,\dots,\alpha_t\in A$ such that $\pi(x)=\sum_{i=1}^t r_i y_{\alpha_i}$. Hence
$\pi\left(x-\sum_{i=1}^t r_i\widetilde y_{\alpha_i}\right)=0$,
and therefore
$x-\sum_{i=1}^t r_i\widetilde y_{\alpha_i}\in\ker\pi$.
This proves the description of $\ker(q\circ\pi)$ in \eqref{eq4}.
Now $\ker\pi$ is $\aleph_m$-generated, and the family
$\{\widetilde y_\alpha\}_{\alpha\in A}$ has cardinality at most
$\aleph_m$. Since $\aleph_m$ is infinite,
it follows that $\ker(q\circ\pi)$ is $\aleph_m$-generated, as needed.
 Moreover, the exact sequence $0\to K_d\to P_{d-1}\to\cdots\to P_0\to M\to0$ exhibits $K_d$ as a $d$-th syzygy of $M$ in a projective resolution (for $d=0$ we set $K_0=M$). Since $\Gfd_RM\le d$ and every ring is GF-closed (see \cite[Corollary~4.12]{SS}), $K_d$ is Gorenstein flat by \cite[Theorem~2.8]{Ben}. Theorem~\ref{theoA} therefore gives $\PGFdim_RK_d\le m+1$. Finally, applying \cite[Lemma~1.2]{St0}, we deduce that $\PGFdim_RM=\PGFdim_RK_0\le\PGFdim_RK_d+d\le d+m+1$. The last assertion follows from Proposition~\ref{prop2.3}(i).
\end{proof}

\begin{corollary}\label{cor4.4}
Let $m$ be a nonnegative integer and $R$ be a ring all of whose left ideals are $\alephm$-generated; for instance, any ring of cardinality at most $\alephm$. Then, every $\alephm$-generated $R$-module $M$ satisfies
\[
\Gpd_RM\;\le\;\PGFdim_RM\;\le\;\Gfd_RM+m+1 .
\]
In particular, over a countable ring every countably generated Gorenstein flat module has PGF-dimension, and hence Gorenstein projective dimension, at most $1$.
\end{corollary}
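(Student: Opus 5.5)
The plan is to reduce Corollary~\ref{cor4.4} to Corollary~\ref{cor4.3} by building a projective resolution of $M$ whose terms are all free of rank at most $\alephm$.

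The first inequality $\Gpd_RM\le\PGFdim_RM$ is Proposition~\ref{prop2.3}(i). For the second we may assume $d:=\Gfd_RM<\infty$, since otherwise there is nothing to prove.

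The key step is the following observation. If every left ideal of $R$ is $\alephm$-generated, then every submodule of an $\alephm$-generated free module $R^{(S)}$ with $|S|\le\alephm$ is again $\alephm$-generated. I would prove this by the standard filtration argument. Well-order $S$ and consider the chain $F_\alpha=R^{(S_{<\alpha})}$. For a submodule $K$, the successive quotients $(K\cap F_{\alpha+1})/(K\cap F_\alpha)$ embed via the $\alpha$-th coordinate projection into $R$. Each is therefore isomorphic to a left ideal, hence $\alephm$-generated. Lifting generators of each quotient gives a generating set for $K$ (by transfinite induction on $\alpha$, using that elements of $K$ have finite support). Its cardinality is at most $|S|\cdot\alephm\le\alephm$.

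Granting this, the resolution is built as follows. Choose an epimorphism $P_0=R^{(S_0)}\to M$ with $|S_0|\le\alephm$. Its kernel is a submodule of an $\alephm$-generated free module, hence $\alephm$-generated, so it is covered by some $P_1=R^{(S_1)}$ with $|S_1|\le\alephm$. Iterating gives a free resolution in which every $P_j$ is $\alephm$-generated, in particular $P_d$ and $P_{d+1}$. Corollary~\ref{cor4.3} then yields $\PGFdim_RM\le d+m+1$.

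For the example in the statement: if $|R|\le\alephm$, every left ideal trivially has at most $\alephm$ elements, hence is $\alephm$-generated. The final sentence is the case $m=0$, $d=0$.

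The main obstacle is the submodule lemma. Its transfinite bookkeeping must be done correctly, notably the limit stages, where $K\cap F_\lambda=\bigcup_{\alpha<\lambda}K\cap F_\alpha$. Alternatively, I could cite it as the classical fact used by Osofsky for global dimension.
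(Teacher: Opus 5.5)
Your proposal is correct and follows essentially the same route as the paper: build a free resolution with all terms of rank at most $\aleph_m$, using that submodules of $\aleph_m$-generated free modules are $\aleph_m$-generated, and then apply Corollary~\ref{cor4.3}. The only difference is that the paper cites this submodule fact from \cite[Lemma~2.46]{Oso}, whereas your filtration proof of it is the same argument the paper itself uses in Lemma~\ref{lem5.5}.
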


\begin{proof} We may assume that $\Gfd_R M=d<\infty$. By hypothesis, every left ideal of $R$ is $\aleph_m$-generated. Hence every submodule of a free $R$-module of rank at most
$\aleph_m$ is again $\aleph_m$-generated (see \cite[Lemma~2.46]{Oso}). 
Since $M$ is $\aleph_m$-generated, we choose an epimorphism $P_0=R^{(X_0)}\twoheadrightarrow M$ with $|X_0|\leq\aleph_m$. Let $K_1=\ker(P_0\to M)$. As $K_1$ is a submodule of the free module $P_0$, whose rank is at most $\aleph_m$, it follows that $K_1$ is $\aleph_m$-generated.
Hence there exists an epimorphism
$P_1=R^{(X_1)}\twoheadrightarrow K_1$
with $|X_1|\leq\aleph_m$.
Composing this map with the inclusion $K_1\hookrightarrow P_0$, we
obtain an exact sequence $P_1\rightarrow P_0\rightarrow M\rightarrow 0$. Proceeding inductively, we suppose that
$P_i=R^{(X_i)}$ has been constructed with $|X_i|\leq\aleph_m$.
Then, the next syzygy $K_{i+1}=\ker(P_i\to P_{i-1})$
is a submodule of the free module $P_i$, and hence is
$\aleph_m$-generated. Therefore, we may choose an epimorphism
$P_{i+1}=R^{(X_{i+1})}\twoheadrightarrow K_{i+1}$ with $|X_{i+1}|\leq\aleph_m$.
Iterating this construction yields a free resolution
$$\cdots\rightarrow P_2\rightarrow P_1
\rightarrow P_0\rightarrow M\rightarrow 0,
$$
such that $P_i\cong R^{(X_i)}$ and $|X_i|\leq\aleph_m$ for every $i\geq 0$. In particular, $P_d$ and $P_{d+1}$ are
$\aleph_m$-generated. Hence Corollary~\ref{cor4.3} applies and yields $\PGFdim_R M\leq d+m+1$.
Since $\Gfd_R M=d$,
we conclude that $\Gpd_R M \leq\PGFdim_R M\leq\Gfd_R M+m+1$, as needed. If $R$ is countable, then we may take $m=0$. Thus, for every
countably generated Gorenstein flat $R$-module $M$, $\Gfd_R M=0$, and therefore $\PGFdim_R M\leq 1$. Consequently, $\Gpd_R M\leq 1$ as well.
\end{proof}

\subsection{A Gorenstein version of Osofsky's inequality}

Corollary~\ref{cor4.4} bounds the Gorenstein projective dimension of an $\alephm$-generated module by its Gorenstein flat dimension, up to the additive constant $m+1$. Passing to the supremum over the cyclic modules $R/I$ turns this pointwise estimate into an inequality between two \emph{global} invariants, and yields a Gorenstein counterpart of a classical theorem of Osofsky and Jensen. Recall that, for a ring all of whose left ideals are $\alephm$-generated, one has $\gldim R\le\wgl R+m+1$, by \cite[Corollary~2.47]{Oso} (see also \cite[Corollary~1.4]{Osofsky1968}), the case $m=0$ of rings with countably generated ideals being due to Jensen \cite{Jen2}.

We recall from Section~\ref{sec:prelim} the definitions of the Gorenstein global dimension and the Gorenstein weak global dimension of a ring $R$ as
$\Ggl R=\sup\{\Gpd_RM:M\ \text{an } R\text{-module}\}$ and $\Gwgl R=\sup\{\Gfd_RM:M\ \text{an } R\text{-module}\}$, respectively. The \emph{global PGF-dimension} $\mathrm{PGF\text{-}gl.dim}\,R=\sup\{\PGFdim_R M:M\ \text{an } R\text{-module}\}$ was introduced and studied in \cite{DE,ElM}. Although the inequality $\Gpd_RM\le\PGFdim_RM$ of Proposition~\ref{prop2.3}(i) is not known to be an equality in general---it is one whenever the right-hand side is finite, by \cite[Corollary~13(ii)]{DE}---the two \emph{global} invariants coincide: $\Ggl R=\mathrm{PGF\text{-}gl.dim}\,R$ for every ring $R$, by \cite[Theorem~4.16]{ElM}.

\begin{corollary}[Osofsky's inequality for the Gorenstein dimensions]\label{cor4.5}
Let $m\ge0$ and let $R$ be a ring all of whose left ideals are $\alephm$-generated; for instance, any ring of cardinality at most $\alephm$. Then
\[
\Ggl R\;=\;\mathrm{PGF\text{-}gl.dim}\,R\;\le\;\Gwgl R+m+1 .
\]
In particular, if $\Gwgl R<\infty$, then $\Ggl R<\infty$. For $m=0$, that is, when every left ideal of $R$ is countably generated, one has $\Ggl R\le\Gwgl R+1$.
\end{corollary}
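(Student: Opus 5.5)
The plan is to reduce the global statement to the pointwise estimate of Corollary~\ref{cor4.4}, applied to cyclic modules, and then to pass from cyclic modules to arbitrary modules by a standard argument on the global PGF-dimension. The equality $\Ggl R=\mathrm{PGF\text{-}gl.dim}\,R$ is \cite[Theorem~4.16]{ElM}, so only the inequality $\mathrm{PGF\text{-}gl.dim}\,R\le\Gwgl R+m+1$ needs proof. We may assume $\Gwgl R=w<\infty$.

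First step: every cyclic module $R/I$ is $\alephm$-generated (indeed finitely generated). By hypothesis all left ideals of $R$ are $\alephm$-generated, so Corollary~\ref{cor4.4} gives $\PGFdim_R(R/I)\le\Gfd_R(R/I)+m+1\le w+m+1$. In the same way, Corollary~\ref{cor4.4} yields $\PGFdim_RM\le w+m+1$ for every $\alephm$-generated module $M$.

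Second step: extend the bound to an arbitrary module $M$. Set $n=w+m+1$ and fix a projective resolution of $M$ with $n$-th syzygy $K$. By Proposition~\ref{prop2.3}(ii) it suffices to show that $K$ is PGF, i.e.\ that $\Ext^1_R(K,L)=0$ for all $L\in\PGF(R)^{\perp}$, using the complete hereditary cotorsion pair $(\PGF(R),\PGF(R)^\perp)$. By dimension shifting this amounts to $\Ext^{n+1}_R(M,L)=0$ for all such $L$. Here I would use the Auslander-type filtration: write $M$ as the union of a continuous well-ordered chain $(M_\alpha)$ with cyclic successive quotients $M_{\alpha+1}/M_\alpha$.

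The vanishing $\Ext^{n+1}_R(-,L)=0$ on cyclic modules is equivalent, by dimension shifting, to saying that the $n$-th syzygies of the cyclic modules lie in ${}^\perp L$. Eklof's lemma then shows that the class of modules $X$ with $\Ext^{n+1}_R(X,L)=0$ for all $L\in\PGF(R)^\perp$ is closed under transfinite extensions. This class equals the class of modules of PGF-dimension at most $n$, since ${}^\perp(\PGF(R)^\perp)=\PGF(R)$. It therefore contains $M$, because the quotients $M_{\alpha+1}/M_\alpha$ lie in it by the first step.

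The main obstacle is the transfinite-extension step. Eklof's lemma applies directly to ${}^{\perp_1}L$, but to apply it to the class $\{X:\PGFdim_RX\le n\}$ one must check closure under transfinite extensions at the level of $\Ext^{n+1}$. The natural way is to construct compatible projective resolutions of the $M_\alpha$ via the horseshoe lemma along the chain, so that the $n$-th syzygy of $M$ is a transfinite extension of the $n$-th syzygies of the quotients; this requires continuity at limit ordinals. Alternatively, one can use that $\Ext^{n+1}_R(-,L)=\Ext^1_R(-,\Omega^{-n}L)$, where $\Omega^{-n}L$ is an $n$-th cosyzygy of $L$ taken in $\PGF(R)^\perp$, which is available since the cotorsion pair is hereditary. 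This reduces the problem to Eklof's lemma for $\Ext^1$. I would try the second route first. The final assertions follow immediately: $\Gwgl R<\infty$ forces $\Ggl R<\infty$, and $m=0$ gives $\Ggl R\le\Gwgl R+1$.
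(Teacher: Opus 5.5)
Your proposal is correct. Its first step matches the paper's: Corollary~\ref{cor4.4} applied to the cyclic modules $R/I$ gives $\PGFdim_R(R/I)\le\Gwgl R+m+1$, and \cite[Theorem~4.16]{ElM} gives $\Ggl R=\mathrm{PGF\text{-}gl.dim}\,R$.

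The difference is how you pass from cyclic modules to arbitrary ones. The paper cites the PGF analogue of Auslander's theorem, \cite[Corollary~4.19]{ElM}, namely $\mathrm{PGF\text{-}gl.dim}\,R=\sup_I\PGFdim_R(R/I)$. You in effect reprove that statement.

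Your second route works, and more simply than you expect. Take an ordinary injective coresolution of $L$. Then $\Ext^{n+1}_R(X,L)\cong\Ext^1_R(X,\Omega^{-n}L)$ holds naturally in $X$ for every module $X$, so you do not need hereditariness of the cotorsion pair. By Proposition~\ref{prop2.3}(ii)(d) and $\PGF(R)={}^{\perp}(\PGF(R)^{\perp})$, the class $\{X:\PGFdim_RX\le n\}$ equals $\{X:\Ext^{n+1}_R(X,L)=0 \text{ for all } L\in\PGF(R)^\perp\}$. This is a class of the form ${}^{\perp_1}\mathcal S$, so it is closed under transfinite extensions by Eklof's lemma. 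The filtration $M_\alpha=\sum_{\beta<\alpha}Rx_\beta$, for a well-ordered generating set $(x_\beta)$ of $M$, has cyclic successive quotients, which finishes the argument. The horseshoe route along the chain is unnecessary.

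The paper's approach gains brevity. Yours gains a self-contained argument that uses only that $(\PGF(R),\PGF(R)^\perp)$ is a cotorsion pair. It also makes clear that the global inequality is purely a consequence of the pointwise bound on cyclic modules.
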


\begin{proof}
We may assume that $\Gwgl R<\infty$, as otherwise there is nothing to prove. By the PGF analogue of Auslander's theorem \cite[Corollary~4.19]{ElM}, valid over any ring,
\begin{equation}\label{eq5}
	\mathrm{PGF\text{-}gl.dim}\,R=\sup\{\PGFdim_R(R/I):I\text{ is a left ideal of } R\}.
\end{equation}
Now each $R/I$ is cyclic and hence $\alephm$-generated. Therefore, Corollary~\ref{cor4.4} yields
\[
\PGFdim_R(R/I)\;\le\;\Gfd_R(R/I)+m+1\;\le\;\Gwgl R+m+1
\]
for every left ideal $I$. Using the characterization \eqref{eq5} of $\mathrm{PGF\text{-}gl.dim}\,R$ and \cite[Theorem~4.16]{ElM}, we deduce that $\Ggl R=\mathrm{PGF\text{-}gl.dim}\,R\le\Gwgl R+m+1$, as needed.
\end{proof}

\begin{remark}[An alternative proof of Corollary~\ref{cor4.4} for rings of cardinality at most $\alephm$]\label{rem4.6}
Assume that $|R|\le\alephm$. By a theorem of Simson \cite[Theorem]{Simson}, obtained independently by Gruson and Jensen \cite{GJ}, every flat module over a ring of cardinality at most $\alephm$ has projective dimension at most $m+1$. In other words, $\operatorname{splf}R\le m+1$, where $\operatorname{splf}R$ denotes the supremum of the projective dimensions of the flat $R$-modules. On the other hand, for every ring $R$ the supremum of the PGF-dimensions of the Gorenstein flat $R$-modules is equal to $\operatorname{splf}R$ (see \cite[Proposition~15]{DE}). Therefore, every Gorenstein flat $R$-module has PGF-dimension at most $m+1$. Now let $M$ be any $R$-module such that $\Gfd_RM=d<\infty$ and let $K_d$ be the $d$-th syzygy of a projective resolution of $M$. Since every ring is GF-closed (see \cite[Corollary~4.12]{SS}), the module $K_d$ is Gorenstein flat by \cite[Theorem~2.8]{Ben}, so that $\PGFdim_RK_d\le m+1$, and \cite[Lemma~1.2]{St0} gives $\PGFdim_RM\le\PGFdim_RK_d+d\le d+m+1$. This argument does not use Theorem~\ref{theoA} and imposes no condition on the module $M$. Taking the supremum over all $R$-modules, we deduce the inequality $\mathrm{PGF\text{-}gl.dim}\,R\le\Gwgl R+m+1$ of Corollary~\ref{cor4.5} for $|R|\le\alephm$.
\end{remark}

\section{Groups}\label{sec:groups}

This section is devoted to groups: we prove Theorem~\ref{theoB} and Corollary~\ref{corC}, we examine the sharpness of the bounds that they provide, and we derive from them bounds for the invariants of the group algebra.

\subsection{Proofs of Theorem~\texorpdfstring{\ref{theoB}}{B} and Corollary~\texorpdfstring{\ref{corC}}{C}}

Two inequalities frame the group-theoretic statements. Since PGF modules are Gorenstein projective \cite{SS}, every PGF-resolution of $k$ is in particular a Gorenstein projective resolution, whence $\Gcd_kG\le\PGFcd_kG$; and when $\sfli k<\infty$ we have $\Ghd_kG\le\Gcd_kG$ by \cite[Theorem~3.10]{KS}. The first of these two inequalities is in fact an equality whenever $\sfli k<\infty$, by \cite[Proposition 2.13]{St0}. The same equality holds for an arbitrary commutative ring $k$ whenever $\Ghd_kG<\infty$ and $|G|\le\alephm$: Theorem~\ref{theoB} then gives $\PGFcd_kG<\infty$, and $\Gpd_RM=\PGFdim_RM$ for every module of finite PGF-dimension, by \cite[Corollary~13(ii)]{DE}. Theorem~\ref{theoB} yields an \emph{upper} bound for $\PGFcd_kG$, which we now prove.

\begin{proof}[Proof of Theorem~\ref{theoB}]
We may assume that $\Ghd_kG<\infty$, since otherwise there is nothing to prove. Let $|G|\le\alephm$ and $\Ghd_kG=\Gfd_{kG}k=n<\infty$. We consider the (unnormalized) bar resolution $$\mathbf B:
\cdots\rightarrow B_2\xrightarrow{d_2}B_1\xrightarrow{d_1}B_0\xrightarrow{\varepsilon}k\rightarrow0,$$
in which $B_j$ is the free $kG$-module on the set $G^{j}$ of $j$-tuples of elements of $G$ and $\varepsilon$ is the augmentation; see, e.g., \cite[Chapter~I.5]{Brown}. Since $|G^j|=|G|^j\le\alephm^{\,j}=\alephm$ for every $j\ge1$, and $B_0=kG$ is free of rank one, each $B_j$ is free on at most $\alephm$ generators. Thus $\mathbf B$ is a projective resolution of the $kG$-module $k$ all of whose terms are $\alephm$-generated, and $\Gfd_{kG}k=n$. Corollary~\ref{cor4.3}, applied to the ring $kG$, the module $k$ and the exact sequence $B_{n+1}\to B_n\to\cdots\to B_0\to k\to0$, yields $\PGFcd_kG=\PGFdim_{kG}k\;\le\;n+m+1$. We conclude that $\Gcd_kG\le\PGFcd_kG\le\Ghd_kG+m+1$, as needed. For $m=0$, $G$ is a countable group and the inequalities $\Gcd_kG\le\PGFcd_kG\le\Ghd_kG+1$ hold.
\end{proof}

\begin{proof}[Proof of Corollary~\ref{corC}]
The first inequality, $\Ghd_kG\le\Gcd_kG$, is \cite[Theorem~3.10]{KS}. For the second, if $\Ghd_kG=\infty$ there is nothing to prove; and if $\Ghd_kG=n<\infty$, then Theorem~\ref{theoB} gives $\Gcd_kG\le\PGFcd_kG\le n+1=\Ghd_kG+1$. The two inequalities together show that $\Ghd_kG$ and $\Gcd_kG$ are finite simultaneously.
\end{proof}

Theorem~\ref{theoB} applies in particular to locally finite groups, for which the Gorenstein homological dimension vanishes.

\begin{corollary}\label{cor5.1}
Let $G$ be a locally finite group of cardinality at most $\alephm$ and let $k$ be a nonzero commutative ring. Then $\Gcd_kG\le\PGFcd_kG\le m+1$. If $G$ is countably infinite, then $\Gcd_kG=\PGFcd_kG=1$.
\end{corollary}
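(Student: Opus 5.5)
The upper bound comes directly from Theorem~\ref{theoB}. By \cite[Theorem~1.4]{KS}, every locally finite group $G$ satisfies $\Ghd_kG=0$. With $|G|\le\alephm$, Theorem~\ref{theoB} then gives $\Gcd_kG\le\PGFcd_kG\le 0+m+1=m+1$. If $G$ is countably infinite, we may take $m=0$, so $\Gcd_kG\le\PGFcd_kG\le1$.

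For the equality in the countably infinite case, we must exclude $\Gcd_kG=0$. Since $\PGFcd_kG$ is finite, \cite[Corollary~13(ii)]{DE} gives $\Gcd_kG=\PGFcd_kG$, so it is enough to show that $k$ is not PGF over $kG$.

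Suppose $k$ were PGF. Note first that $\fd_{kG}k=0$, since $\hd_kG=0$ for locally finite groups. To see this, write $G=\varinjlim G_\lambda$ over its finite subgroups. Then $k=\varinjlim k\otimes_{kG_\lambda}kG$ is a directed colimit of the modules $k[G/G_\lambda]$, and each $k[G/G_\lambda]$ is flat. So $k$ would be a PGF module of finite flat dimension. By \cite[Lemma~2.1]{KS} (Proposition~\ref{prop2.3}(v)), $k$ would then be projective over $kG$.

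Flatness, however, needs more care: $k[G/H]$ is flat only if $|H|$ is invertible in $k$. So the flatness route is unreliable, and this is the main obstacle. I would argue directly instead. If $k$ were projective (or merely Gorenstein projective), then, since $\Ext$ commutes appropriately with $\PGF$, the augmentation $\varepsilon\colon kG\to k$ would split for any PGF $k$. Precisely, $k$ PGF implies $k$ embeds in a projective $P$, and $\Hom_{kG}(k,P)=P^G$. Since $G$ is infinite, $(kG)^G=0$, and hence $P^G=0$ for every projective $P$, because $P$ is a summand of a free module $kG^{(X)}$, whose $G$-invariants vanish. As $k\neq0$, the trivial module $k$ admits no embedding into a projective module. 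But a PGF module is a submodule of a projective module (it is a cycle of an acyclic complex of projectives), which is the required contradiction. This argument needs only that $G$ is infinite and $k\ne0$, and it gives $\PGFcd_kG\ge1$, hence $\Gcd_kG=\PGFcd_kG=1$.
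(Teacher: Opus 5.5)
Your proof is correct once two abandoned remarks are deleted, and it takes a different route from the paper for the lower bound. The upper bound is exactly the paper's argument: $\Ghd_kG=0$ by \cite[Theorem~1.4]{KS}, then Theorem~\ref{theoB}.

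For the countably infinite case, the paper simply cites \cite[Corollary~2.3]{ET} to get $\Gcd_kG\ge1$. You instead prove it directly. Any cycle of an acyclic complex of projectives is a submodule of a projective $P$. On the other hand $\Hom_{kG}(k,P)\cong P^G=0$, because $(kG)^G=0$ for infinite $G$ and invariants pass to direct sums and direct summands. Since $k\neq0$, $k$ cannot embed in $P$.

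This argument proves more than you use. It needs neither local finiteness nor countability. It also applies verbatim to Gorenstein projective modules, since they too are cycles of acyclic complexes of projectives. So it gives $\Gcd_kG\ge1$ directly, and your appeal to \cite[Corollary~13(ii)]{DE} is superfluous: one simply has $1\le\Gcd_kG\le\PGFcd_kG\le1$. The paper's route buys brevity. Yours makes the corollary self-contained; it is essentially the standard argument behind the fact that $\Gcd_kG=0$ forces $G$ to be finite.

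Delete the material you abandoned, because as written it is wrong:
\begin{itemize}
\item The claim that $\fd_{kG}k=0$ ``since $\hd_kG=0$ for locally finite groups'' is false in general. As the paper's introduction notes, $\hd_kG=\infty$ as soon as $G$ has an element of finite order not invertible in $k$; for example $k=\Z$ and $G=\bigoplus_{\mathbb N}C_2$.
\item The sentence saying the augmentation $kG\to k$ would split whenever $k$ is PGF is unjustified, since a PGF module need not be projective. For $k=\Z$ and $G=C_2$, the trivial module is PGF but not projective over $\Z C_2$.
\end{itemize}
Neither remark is used in your final argument.
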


\begin{proof}
Since $G$ is locally finite, $\Ghd_kG=0$ (see \cite[Theorem~1.4]{KS}), and Theorem~\ref{theoB} gives the first assertion. If $G$ is infinite, then \cite[Corollary~2.3]{ET} yields $\Gcd_kG\ge1$, and the second assertion follows from the first for $m=0$.
\end{proof}

\subsection{Sharpness of the bounds}

The estimates obtained so far are best possible. We show in this subsection that both inequalities of Corollary~\ref{corC} are attained, the upper one in every degree and by countable groups (see Remark~\ref{rem5.2} and Proposition~\ref{prop5.3}), and that the error term $m+1$ of Theorem~\ref{theoB} cannot be improved for any value of $m$, already among locally finite abelian groups (see Proposition~\ref{prop5.4}).

\begin{remark}\label{rem5.2}
Both bounds in Corollary~\ref{corC} are attained, and so is the bound of Theorem~\ref{theoB} for $m=0$. The lower bound $\Gcd_kG=\Ghd_kG$ is realized by all groups of type $\mathrm{FP}_\infty$ over a commutative ring $k$ such that $\sfli k <\infty$ (see \cite[Theorem~B]{St}). For the upper bound, let $k$ be a nonzero commutative ring and $G$ be a countably infinite locally finite group. Then $\Ghd_kG=0$, since $G$ is locally finite (see \cite[Theorem~1.4]{KS}), whereas $\Gcd_kG=\PGFcd_kG=1$ by Corollary~\ref{cor5.1}. Consequently, \[
\Gcd_kG=\PGFcd_kG=1=\Ghd_kG+1 .
\]
The next result shows that the same phenomenon occurs in every degree.
\end{remark}

\begin{proposition}\label{prop5.3}
Let $k$ be a nonzero commutative ring with $\sfli k<\infty$ and let $n$ be a nonnegative integer. Then, there exists a countable group $G$ such that
\[
\Ghd_kG=n
\,\,\,\,\,\,\text{and}\,\,\,\,\,\,
\Gcd_kG=\PGFcd_kG=n+1=\Ghd_kG+1 .
\]
\end{proposition}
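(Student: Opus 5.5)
The plan is to take $G=H\times\Z^n$, where $H$ is a countably infinite locally finite group (for instance $H=\bigoplus_{\mathbb N}\Z/2$). Then $G$ is countable. I would prove $\Ghd_kG=n$ and $\Gcd_kG\ge n+1$, and obtain the reverse bound $\Gcd_kG\le\PGFcd_kG\le n+1$ directly from Theorem~\ref{theoB} with $m=0$ once $\Ghd_kG=n$ is known. The equality $\Gcd_kG=\PGFcd_kG$ holds because $\sfli k<\infty$, by \cite[Proposition~2.13]{St0}.

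For the homological part, I would use product estimates and subgroup monotonicity. For the upper bound, $\Ghd_k(H\times\Z^n)\le\Ghd_kH+\hd_k\Z^n=0+n$. This follows by tensoring over $k$ a Gorenstein flat resolution of $k$ over $kH$ with the Koszul free resolution of length $n$ over $k\Z^n$. The product terms $X\otimes_k k\Z^n$ are of the form $X\uparrow^{G}$ after inflation, and these are Gorenstein flat; a standard result of \cite{KS} or \cite{St} should cover this. For the lower bound, $\Z^n\le G$ gives $n=\hd_k\Z^n=\Ghd_k\Z^n\le\Ghd_kG$. Here I use that Gorenstein and classical dimensions agree when the latter is finite, together with monotonicity of $\Ghd$ under passage to subgroups, available when $\sfli k<\infty$ \cite{KS}.

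For the cohomological lower bound $\Gcd_kG\ge n+1$, I would use a Künneth-type argument. Since $\Gcd_kH=1$ by Corollary~\ref{cor5.1}, there is a $kH$-module $M$ with $\Ext^1_{kH}(k,M)\ne0$, which I take to be free, say $M=kH^{(X)}$. This is possible because for finite $\Gcd$ it is detected by $\Ext$ into projectives (\cite{ET}, Holm's characterization). Then I take the $kG$-module $M\otimes_k k\Z^n$, which is free over $kG$. Using the Koszul resolution and the Lyndon--Hochschild--Serre spectral sequence for $H\trianglelefteq G$ with quotient $\Z^n$, I expect
\[
\Ext^{n+1}_{kG}\bigl(k,M\otimes_k k\Z^n\bigr)\cong \Ext^n_{k\Z^n}\bigl(k,\Ext^1_{kH}(k,M)\otimes_k k\Z^n\bigr)\cong \Ext^1_{kH}(k,M)\neq0,
\]
where the last step is the top cohomology of $\Z^n$ with coefficients in an induced module. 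Since $\Gcd_kG<\infty$, nonvanishing of $\Ext^{n+1}$ into a free module forces $\Gcd_kG\ge n+1$.

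The main obstacle is the spectral-sequence step. One has to check that $\Ext^q_{kH}(k,M)$ vanishes for $q\ne1$, which needs $q=0$ as well, i.e.\ $\Hom_{kH}(k,kH^{(X)})=0$ because $H$ is infinite, together with $q\ge2$ from $\Gcd_kH=1$. One must also check that the coefficient module is induced, so the spectral sequence collapses, and that $H^n(\Z^n,A\otimes k\Z^n)\cong A$. If this proves delicate, I would instead pass to a special case and cite the product formula $\Gcd_k(H\times\Z^n)=\Gcd_kH+n$ from the literature on Gorenstein dimensions of direct products.
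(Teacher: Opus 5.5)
Your overall architecture matches the paper: the same group $G=H\times\Z^n$, the same resolution of $k$ by the Gorenstein flat modules $(\mathrm{Ind}_H^Gk)^{\binom nj}$ giving $\Ghd_kG\le n$, and Theorem~\ref{theoB} for $\PGFcd_kG\le n+1$. For $\Ghd_kG\ge n$ you use subgroup monotonicity, whereas the paper reads it off from $n+1=\Gcd_kG\le\Ghd_kG+1$ via Corollary~\ref{corC}, which avoids an extra citation.

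\textbf{The gap.} The step you flagged does fail as stated. In the spectral sequence for $H\trianglelefteq G$ the coefficient module is $A=\Ext^1_{kH}\bigl(k,\bigoplus_{\Z^n}M\bigr)$. You identify $A$ with the induced module $\Ext^1_{kH}(k,M)\otimes_kk\Z^n$. That requires $\Ext^1_{kH}(k,-)$ to commute with an infinite direct sum, which is unavailable: $H$ is not of type $\mathrm{FP}_\infty$, indeed not even finitely generated. Concretely, write $H=\bigcup_iH_i$ with $H_i$ finite. For free $V$ one has $\Ext^1_{kH}(k,V)\cong\varprojlim^1_iV^{H_i}$. But $\varprojlim^1$ of a direct sum of towers is in general strictly larger than the direct sum of the $\varprojlim^1$'s. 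So the natural map from the induced module to $A$ is not onto, and the ``top cohomology of an induced module'' step does not apply. Your final value happens to be correct: one can show $H^n(\Z^n,A)\cong A_{\Z^n}\cong\varprojlim^1_iM^{H_i}\cong\Ext^1_{kH}(k,M)$, using that $k$ is finitely presented over $k\Z^n$ so coinvariants commute with products. But that is an argument you have not supplied.

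\textbf{The repair.} Reverse the roles of the two factors and use $\Z^n$, which has a finite resolution by finitely generated free modules, as the normal subgroup.
\begin{itemize}
\item Let $K_\bullet$ be the Koszul resolution. Then $\Hom_{k\Z^n}(K_\bullet,M\otimes_kk\Z^n)\cong M\otimes_k\Hom_{k\Z^n}(K_\bullet,k\Z^n)$.
\item The dual Koszul complex is a bounded complex of free $k$-modules with cohomology $k$ concentrated in degree $n$. Hence it is $k$-homotopy equivalent to $k[-n]$, and the complex above is $kH$-homotopy equivalent to $M[-n]$.
\item Compute with the resolution $P_\bullet\otimes_kK_\bullet$ of $k$ over $kG$, where $P_\bullet$ is a projective $kH$-resolution. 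This gives $\Ext^q_{kG}(k,M\otimes_kk\Z^n)\cong\Ext^{q-n}_{kH}(k,M)$ for all $q$.
\item Hence $\Ext^{n+1}_{kG}(k,kG^{(X)})\cong\Ext^1_{kH}(k,kH^{(X)})\neq0$, so $\Gcd_kG\ge n+1$.
\end{itemize}
This needs neither the vanishing of $\Hom_{kH}(k,kH^{(X)})$ nor any collapse argument. It is exactly the role of the conditions on $H^i(\Z^n,k\Z^n)$ that the paper verifies ($0$ for $i\neq n$, and $k$ for $i=n$) before applying \cite[Theorem~4.5]{ET3}. Your fallback of quoting a product formula is therefore the paper's actual route.
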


\begin{proof}
Let $A$ be a countably infinite locally finite group, set $N=\mathbb Z^n$ and $G=A\times N$, so that $G$ is countable. The torus $T^n=(S^1)^n$ has fundamental group $N$ and
contractible universal cover $\mathbb R^n$. It is therefore
an Eilenberg--MacLane complex of type $K(N,1)$
(see \cite[Example~1B.5]{Hatcher2002}). Equip each circle with the CW structure consisting of one vertex and one edge. The induced product CW structure on $T^n$ has $\binom{n}{j}$ cells of dimension $j$, corresponding to the choices of the $j$ factors that contribute an edge (see \cite[p.~8]{Hatcher2002} for the product construction). Lifting this CW structure to $\mathbb R^n$, the action of $N$ by deck transformations is cellular and free, with one orbit of cells for each cell of $T^n$. Hence, writing $C_j=C_j(\mathbb R^n;k)$, we have $C_j\cong (kN)^{\binom{n}{j}}$, for every $0\leq j\leq n$. Since $\mathbb R^n$ is contractible, the augmented cellular
chain complex
\[
\mathbf C:
0\rightarrow C_n
\rightarrow\cdots
\rightarrow C_1
\rightarrow C_0
\xrightarrow{\varepsilon} k
\rightarrow 0
\]
is exact, where $\varepsilon$ sends each vertex to $1\in k$.
By the standard cellular construction of free resolutions
(see \cite[Chapter~I, \S4]{Brown}), this is a resolution of the
trivial $kN$-module $k$ by finitely generated free $kN$-modules
of length $n$. Consequently, $N$ is of type
$\mathrm{FP}_{\infty}$ over $k$ and moreover
$\operatorname{cd}_k N\leq n$. Furthermore, $T^n$ is a closed, connected, orientable,
aspherical $n$-manifold. Its fundamental group $N$ is
therefore an orientable Poincar\'e duality group of
dimension $n$ (see \cite[Chapter~VIII, \S10, Example~1]{Brown}).
Poincar\'e duality thus yields natural isomorphisms
$H^i(N,M)\cong H_{n-i}(N,M)$ for every $0\leq i\leq n$
and every $\mathbb ZN$-module $M$ (see \cite[Chapter~VIII, Theorem~10.1 and p.~222]{Brown}).
In particular, this applies to $M=kN$, regarded as a
$\mathbb ZN$-module by restriction of scalars.

For a $kN$-module $M$, the group homology and cohomology
appearing here may equivalently be computed over $kN$.
Indeed, the integral augmented cellular complex of
$\mathbb R^n$ is split exact as a complex of abelian groups;
tensoring it with $k$ over $\mathbb Z$ therefore gives the
free $kN$-resolution $\mathbf C$ above. The usual
tensor--Hom adjunctions identify the resulting complexes
computing homology and cohomology with coefficients in $M$.
Since $kN$ is free over itself, we have
\[
H_j(N,kN)
\cong \operatorname{Tor}^{kN}_j(k,kN)
=
\begin{cases}
k, & j=0,\\
0, & j>0.
\end{cases}
\]
Combining this computation with Poincar\'e duality and the
bound $\operatorname{cd}_k N\leq n$, we obtain
\[
H^i(N,kN)\cong
\begin{cases}
0, & i\neq n,\\[1mm]
k, & i=n.
\end{cases}
\]
In particular, $H^i(N,kN)=0$ is projective for every $i<n$, and $H^n(N,kN)\cong k$ contains $k$ as a direct summand; these are the conditions on $N$ required in \cite[Theorem~4.5]{ET3}. Since $H^n(N,kN)\neq0$, we also have $\cd_k N=n$, and hence $\Gcd_k N=n$ by \cite[Proposition 2.27]{Hol}. We may therefore apply \cite[Theorem~4.5]{ET3} to the split extension $1\rightarrow N\rightarrow G\rightarrow A\rightarrow1$, with $Q=A$. Since $\Gcd_kA=1$ by Corollary~\ref{cor5.1}, we obtain $\Gcd_kG=\Gcd_k N+\Gcd_kA=n+1$. Finally, we let $G=A\times\mathbb Z^n$ act on $\mathbb R^n$ through the projection $G\rightarrow\mathbb Z^n$, the subgroup $A$ acting trivially. This is a cellular action of $G$ on a contractible $G$-CW-complex of dimension $n$, all of whose cell stabilizers are equal to $A$. Consequently, $\mathbf C$ becomes an exact sequence of $kG$-modules whose terms are direct sums of copies of $k[G/A]=\mathrm{Ind}^G_Ak$ (see \cite[Lemma~3.1]{Bis}). Since $A$ is locally finite we have $\Ghd_kA=0$ (see \cite[Theorem~1.4]{KS}), i.e.\ $k$ is a Gorenstein flat $kA$-module. Induction from $A$ to $G$ preserves Gorenstein flat modules (see \cite[Lemma~2.9(i)]{KS}), and hence $\mathrm{Ind}^G_Ak$ is a Gorenstein flat $kG$-module. Since the class ${\tt GFlat}(kG)$ is closed under finite direct sums, it follows that each term
$C_j\cong(\mathrm{Ind}^G_Ak)^{\binom{n}{j}}$ of $\mathbf C$ is a Gorenstein flat $kG$-module. It follows that $\Ghd_kG\le n$, so that $n+1=\Gcd_kG\le\Ghd_kG+1\le n+1$ by Corollary~\ref{corC}. We conclude that $\Ghd_kG=n$ and $\Gcd_kG=\Ghd_kG+1=n+1$. Finally, Theorem~\ref{theoB} gives $n+1=\Gcd_kG\le\PGFcd_kG\le\Ghd_kG+1=n+1$, and hence $\PGFcd_kG=n+1$ as well.
\end{proof}


\begin{proposition}\label{prop5.4}
Let $m\geq0$ be a nonnegative integer.
\begin{enumerate}
\item[(i)] The locally finite abelian group $G_m=\bigoplus_{\alpha<\aleph_m}C_2$, where $C_2$ denotes the cyclic group of order two, satisfies $\Ghd_{\mathbb Z}G_m=0$ and $\Gcd_{\mathbb Z}G_m=\PGFcd_{\mathbb Z}G_m=m+1$. In particular, the bound of Theorem~\ref{theoB} is sharp for every finite $m$, already among locally finite abelian groups.
\item[(ii)] If $m=1$, then every locally finite group $G$ with $|G|=\aleph_1$ satisfies $\Gcd_{\mathbb Z}G=\PGFcd_{\mathbb Z}G=2=\Ghd_{\mathbb Z}G+2$.
\end{enumerate}\end{proposition}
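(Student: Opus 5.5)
The plan is to squeeze $\Gcd_{\mathbb Z}G_m$ between an upper bound coming from Corollary~\ref{cor5.1} and a lower bound coming from the classical rational cohomological dimension of locally finite groups. The group $G_m$ is locally finite and abelian, and it has cardinality exactly $\aleph_m$. By \cite[Theorem~1.4]{KS} we get $\Ghd_{\mathbb Z}G_m=0$. Corollary~\ref{cor5.1}, applied with $k=\mathbb Z$, then gives $\Gcd_{\mathbb Z}G_m\le\PGFcd_{\mathbb Z}G_m\le m+1$. Everything therefore reduces to proving the single lower bound $\Gcd_{\mathbb Z}G_m\ge m+1$.

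For the lower bound I would first compare with rational coefficients. The aim is the inequality $\Gcd_{\mathbb Q}G\le\Gcd_{\mathbb Z}G$ for every group $G$, which should follow by flat base change. Applying $\mathbb Q\otimes_{\mathbb Z}-$ to a Gorenstein projective resolution of $\mathbb Z$ over $\mathbb ZG$ yields a resolution of $\mathbb Q$ over $\mathbb QG$. Its terms should be Gorenstein projective: $\mathbb Q\otimes_{\mathbb Z}-$ takes totally acyclic complexes of projective $\mathbb ZG$-modules to complexes of projective $\mathbb QG$-modules. These stay totally acyclic, because $\Hom_{\mathbb QG}(\mathbb Q\otimes_{\mathbb Z}P,Q')\cong\Hom_{\mathbb ZG}(P,Q')$ and a projective $\mathbb QG$-module $Q'$ is a direct summand of a free $\mathbb QG$-module, which is in turn a direct summand of a module admitted by the $\mathbb ZG$-test. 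I expect this base-change statement to be available in \cite{ET} or \cite{KS}, and I would cite it rather than reprove it.

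Next, $\Gcd_{\mathbb Q}G_m\le\Gcd_{\mathbb Z}G_m\le m+1$ is finite. Since $\mathbb Q$ is a field of characteristic zero and $G_m$ is locally finite, I would invoke Holt's theorem: a locally finite group of cardinality $\aleph_m$ has $\cd_{\mathbb Q}G=m+1$. In particular $\cd_{\mathbb Q}G_m$ is finite. The Gorenstein projective dimension agrees with the projective dimension whenever the latter is finite \cite[Proposition~2.27]{Hol}, so $\Gcd_{\mathbb Q}G_m=\cd_{\mathbb Q}G_m=m+1$. Combining this with the upper bound gives
\[
m+1\le\Gcd_{\mathbb Z}G_m\le\PGFcd_{\mathbb Z}G_m\le m+1,
\]
which proves (i). Sharpness of Theorem~\ref{theoB} then follows, because $\Ghd_{\mathbb Z}G_m=0$.

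Part (ii) uses the same argument with $m=1$ for an arbitrary locally finite group $G$ with $|G|=\aleph_1$. The bound $\Ghd_{\mathbb Z}G=0$ again comes from \cite[Theorem~1.4]{KS}, and the upper bound $2$ from Corollary~\ref{cor5.1}. Holt's theorem, in the form that $\cd_{\mathbb Q}G=n+1$ for every locally finite group with $|G|=\aleph_n$, gives $\cd_{\mathbb Q}G=2$, and base change then yields the lower bound $2$. The main obstacle is the lower bound. It requires importing Holt's classical computation correctly, including its exact cardinality hypothesis, and making the base-change inequality $\Gcd_{\mathbb Q}\le\Gcd_{\mathbb Z}$ rigorous.
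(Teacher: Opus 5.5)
You have the same architecture as the paper: $\Ghd_{\mathbb Z}G_m=0$ by \cite[Theorem~1.4]{KS}, the upper bound $\Gcd_{\mathbb Z}G_m\le\PGFcd_{\mathbb Z}G_m\le m+1$ by Corollary~\ref{cor5.1}, and the lower bound through $\cd_{\mathbb Q}G=\Gcd_{\mathbb Q}G\le\Gcd_{\mathbb Z}G$. The paper cites \cite[Corollary~4.2]{Ren} for that base change. The gap is in your input for the rational dimension. You attribute to Holt the statement that \emph{every} locally finite group of cardinality $\aleph_m$ has $\cd_{\mathbb Q}G=m+1$. As far as is known, this is not a theorem you can cite for $m\ge2$. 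In that generality only the upper bound $\cd_{\mathbb Q}G\le m+1$ is available. The reverse inequality for arbitrary locally finite groups is known only for $m\le1$, and otherwise only for special groups. This is exactly why part (ii) is stated only for $m=1$, and why (i) is formulated for one specific abelian group. So for $m\ge2$ the whole lower bound in your proof of (i) rests on an unsupported citation, which is precisely the step you flagged as the main obstacle. What you need is a computation specific to $G_m$. The paper uses Chen's theorem in the form recorded in \cite[Corollary~6.10(2)]{DKLT2002}, which gives $\cd_{\mathbb Q}G_m=m+1$. Part (ii) survives, because the $m=1$ case is known; the paper takes it from \cite[Theorem~5.4]{DKLT2002}, i.e.\ $H^2(G,\mathbb QG)\neq0$ for every locally finite $G$ with $|G|=\aleph_1$. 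Replacing $\Gcd_{\mathbb Q}$ by $\cd_{\mathbb Q}$ via \cite[Proposition~2.27]{Hol} requires $\cd_{\mathbb Q}G<\infty$. The paper gets this independently of any lower bound: $\mathbb QG$ is von Neumann regular of cardinality $\aleph_m$, so \cite[Corollary~1.4]{Osofsky1968} gives $\gldim\mathbb QG\le m+1$.

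There is also a smaller inaccuracy in your base-change sketch. A free $\mathbb QG$-module is \emph{not} a direct summand of a projective $\mathbb ZG$-module: such a summand would be free abelian, and a nonzero $\mathbb Q$-vector space is not. The correct reason is different. Every projective $\mathbb QG$-module has projective dimension at most $1$ over $\mathbb ZG$, since $\pd_{\mathbb Z}\mathbb Q=1$. For a totally acyclic complex $\mathbf P$ of projective $\mathbb ZG$-modules, $\Hom_{\mathbb ZG}(\mathbf P,L)$ is acyclic whenever $\pd_{\mathbb ZG}L<\infty$. To see this, induct along a finite projective resolution of $L$, using that $\Hom_{\mathbb ZG}(\mathbf P,-)$ carries short exact sequences to short exact sequences of complexes. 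With these two repairs your argument coincides with the paper's.
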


\begin{proof}
(i) Let $G$ be a locally finite group with
$|G|=\aleph_m$. The group algebra $\mathbb{Q}G$ is a
directed union of the semisimple algebras $\mathbb{Q}H$,
where $H$ ranges over the finite subgroups of $G$.
Since von Neumann regularity is preserved under directed unions, $\mathbb{Q}G$ is
von Neumann regular (see also
\cite[Theorem~3 and the subsequent Remark~(b), p.~659]{Connell1963}).
Consequently,
$\wgl(\mathbb{Q}G)=0$
by \cite[Theorem~4.2.9]{Weibel1994}.
Since $|\mathbb{Q}G|=\aleph_m$, Osofsky's bound
\cite[Corollary~1.4]{Osofsky1968} gives $\gldim(\mathbb{Q}G)\leq \wgl(\mathbb{Q}G)+m+1
=m+1.$
In particular, the trivial $\mathbb{Q}G$-module $\mathbb{Q}$
has finite projective dimension. Since Gorenstein projective
dimension agrees with projective dimension whenever the latter
is finite (see \cite[Proposition~2.27]{Hol}), we obtain that $\Gcd_{\mathbb{Q}}G=\cd_{\mathbb{Q}}G.$
Using \cite[Corollary~4.2]{Ren} together with Corollary~\ref{cor5.1}, we deduce that
\begin{equation}\label{eq6}
	\cd_{\mathbb{Q}}G
=\Gcd_{\mathbb{Q}}G
\leq\Gcd_{\mathbb{Z}}G
\leq\PGFcd_{\mathbb{Z}}G
\leq m+1.
\end{equation}
We now specialize this to $G=G_m$, which is a locally finite abelian group of cardinality $\aleph_m$. By Chen's theorem, in the form recorded in
\cite[Corollary~6.10(2)]{DKLT2002}, we have
$\cd_{\mathbb{Q}}G_m=m+1.$
Applying the preceding comparison \eqref{eq6} to $G_m$, we deduce that
$\Gcd_{\mathbb{Z}}G_m
=\PGFcd_{\mathbb{Z}}G_m
=m+1$.
Moreover, local finiteness implies
$\Ghd_{\mathbb{Z}}G_m=0$ (see \cite[Theorem~1.4]{KS}). Hence
$\PGFcd_{\mathbb{Z}}G_m
=\Ghd_{\mathbb{Z}}G_m+m+1$, as needed.

(ii) Let $m=1$ and $G$ be a locally finite group of cardinality $\aleph_1$. Then
\cite[Theorem~5.4]{DKLT2002} yields $H^2(G,\mathbb{Q}G)\neq 0$,
and hence $\cd_{\mathbb{Q}}G\geq 2$.
Together with the upper bound established above, this yields
$\cd_{\mathbb{Q}}G=2$.
Since this dimension is finite, we also have
$\Gcd_{\mathbb{Q}}G=2$.
The preceding comparison therefore gives
$2=\Gcd_{\mathbb{Q}}G
\leq\Gcd_{\mathbb{Z}}G
\leq\PGFcd_{\mathbb{Z}}G
\leq 2.$
Consequently,
$\Gcd_{\mathbb{Z}}G
=\PGFcd_{\mathbb{Z}}G
=2.$
Since $G$ is locally finite we also have $\Ghd_{\mathbb Z}G=0$ (see \cite[Theorem~1.4]{KS}) and (ii) follows.
\end{proof}

\subsection{Applications to group algebras} We finally specialize the bound of Corollary~\ref{cor4.5} to group algebras. Since $kG\cong(kG)^{\mathrm{op}}$, the Gorenstein weak global dimension of $kG$ is the Gedrich--Gruenberg invariant $\sfli(kG)$, which is in turn bounded in terms of $\Ghd_kG$ and $\sfli k$ (see \cite[Corollary~6.2]{KS}); the two corollaries below combine these facts.

The hypothesis needed for Corollary~\ref{cor4.5} concerns the number of generators of the left ideals of $kG$. The following lemma shows that it is guaranteed by the corresponding condition on the ideals of $k$ alone.

\begin{lemma}\label{lem5.5} Let $k$ be a commutative ring, $G$ be a group and $m$ be a nonnegative integer such that $|G|\le\alephm$ and every ideal of $k$ is $\alephm$-generated. Then, every left ideal of $kG$ is $\alephm$-generated, and indeed $\alephm$-generated as a $k$-module. In particular, the conclusion holds whenever
$|G|\leq\aleph_m$ and $|k|\leq\aleph_m$.\end{lemma}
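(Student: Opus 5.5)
We will show the stronger statement: every $k$-submodule of $kG$ is $\alephm$-generated as a $k$-module. A left ideal of $kG$ is in particular a $k$-submodule, and a generating set over $k$ also generates over $kG$, so both assertions of Lemma~\ref{lem5.5} follow. As a $k$-module, $kG$ is free on the basis $G$, so $kG\cong k^{(G)}$ with $|G|\le\alephm$. The statement therefore reduces to a purely commutative-algebra fact: if every ideal of $k$ is $\alephm$-generated, then every submodule of a free $k$-module of rank at most $\alephm$ is $\alephm$-generated.

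This fact is exactly \cite[Lemma~2.46]{Oso}, already used in the proof of Corollary~\ref{cor4.4}, applied to the ring $k$. For completeness, here is the direct argument. Well-order the basis $G$ as $(g_\beta)_{\beta<\lambda}$ with $\lambda\le\omega_m$. For a submodule $L\le k^{(G)}$, let $F_\beta$ be the span of the $g_\gamma$ with $\gamma<\beta$, and set $L_\beta=L\cap F_\beta$. Each quotient $L_{\beta+1}/L_\beta$ embeds, via the $g_\beta$-coordinate, into an ideal $\mathfrak a_\beta$ of $k$. Since $\mathfrak a_\beta$ is $\alephm$-generated, we choose at most $\alephm$ elements of $L_{\beta+1}$ whose $g_\beta$-coordinates generate $\mathfrak a_\beta$.

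We then show by transfinite induction that the union over $\beta<\lambda$ of these chosen families generates $L$. Take $x\in L$ and let $\beta$ be the largest index with nonzero coordinate (this exists since $x$ has finite support). Subtracting a suitable $k$-combination of the elements chosen at stage $\beta$ gives an element of $L_\beta$ with strictly smaller top index. Since the ordinals are well-ordered, this descent terminates at $0$. The total number of chosen generators is at most $|\lambda|\cdot\alephm\le\alephm\cdot\alephm=\alephm$.

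The only delicate point is the bookkeeping in the descent: the elements chosen at stage $\beta$ must have zero coordinates beyond $g_\beta$ (ensured by taking them in $F_{\beta+1}$), so that the top index really drops. Finally, if $|k|\le\alephm$, then every ideal of $k$ has cardinality at most $\alephm$ and is trivially $\alephm$-generated, which gives the last assertion.
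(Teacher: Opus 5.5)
Your proof is correct and follows essentially the same route as the paper's: the same filtration $L\cap F_\beta$ coming from a well-ordering of the basis $G$, the same coordinate map identifying $L_{\beta+1}/L_\beta$ with an ideal of $k$, and the same cardinal count $|\lambda|\cdot\aleph_m\le\aleph_m$. The differences are only in presentation. You replace the paper's transfinite induction, with its separate limit step, by a well-founded descent on the top index of the finite support. You also observe that the statement is just \cite[Lemma~2.46]{Oso} applied to $k$, whereas the paper writes the argument out in full.
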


\begin{proof}Let $\kappa=\aleph_m$ and $I$ be a left ideal of $kG$.
We shall prove that $I$ can be generated by at most $\kappa$ elements even as a $k$-module. The group algebra $kG$ is a free $k$-module with basis $G$.
Choose a well-ordering $G=\{g_\alpha:\alpha<\lambda\}$,
where $\lambda$ is an ordinal such that $|\lambda|=|G|\leq\kappa$.
For every $\alpha\leq\lambda$, we define the $k$-submodules $F_\alpha=\bigoplus_{\beta<\alpha}kg_\beta$ and $I_\alpha=I\cap F_\alpha$ of $kG$, where $I_0=0$, $I_\lambda=I$ and $I_\alpha\subseteq I_{\alpha+1}$ for every $\alpha<\lambda$. For each $\alpha<\lambda$, we consider the $k$-linear map
$\varphi_\alpha:I_{\alpha+1}\rightarrow k$ that assigns to an element its coefficient at $g_\alpha$. Since
$F_{\alpha+1}=F_\alpha\oplus kg_\alpha$,
an element of $I_{\alpha+1}$ lies in the kernel of
$\varphi_\alpha$ precisely when it belongs to $I_\alpha$.
Thus $\ker\varphi_\alpha=I_\alpha$. Moreover, the image
$J_\alpha=\operatorname{im}\varphi_\alpha$
is a $k$-submodule of $k$, hence an ideal of $k$.
The first isomorphism theorem therefore gives
$I_{\alpha+1}/I_\alpha\cong J_\alpha$
as $k$-modules. By hypothesis, $J_\alpha$ has a generating set of
cardinality at most $\kappa$. Choosing preimages of these generators under
$\varphi_\alpha$, we obtain a subset
$S_\alpha\subseteq I_{\alpha+1}$ with
$|S_\alpha|\leq\kappa$ whose image generates
$I_{\alpha+1}/I_\alpha$.
Equivalently, $I_{\alpha+1}=I_\alpha+\langle S_\alpha\rangle_k$,
where $\langle S_\alpha\rangle_k$ denotes the
$k$-submodule generated by $S_\alpha$. We set $S=\bigcup_{\alpha<\lambda}S_\alpha$.
Since $\kappa$ is infinite and $|\lambda|\leq\kappa$,
we deduce that $|S|\leq |\lambda|\cdot\kappa
\leq \kappa\cdot\kappa=\kappa$.
It remains to show that $S$ generates $I$ as a $k$-module. Using transfinite induction we will show that $I_\alpha=\left\langle\bigcup_{\beta<\alpha}S_\beta\right\rangle_k$, for every $\alpha\leq\lambda$. The inclusion from right to left follows directly from the inclusions $S_\beta\subseteq I_{\beta+1}\subseteq I_\alpha$ for every $\beta<\alpha$.
For the reverse inclusion, the case $\alpha=0$ is immediate. We suppose that the assertion holds for $\alpha$
and let $x\in I_{\alpha+1}$.
Since the image of $S_\alpha$ generates
$I_{\alpha+1}/I_\alpha$, there exist
$s_1,\ldots,s_t\in S_\alpha$ and
$a_1,\ldots,a_t\in k$ such that $x-\sum_{r=1}^{t}a_rs_r\in I_\alpha$.
By the induction hypothesis, this difference is a finite
$k$-linear combination of elements of
$\bigcup_{\beta<\alpha}S_\beta$.
Hence $x$ belongs to the $k$-submodule generated by
$\bigcup_{\beta<\alpha+1}S_\beta$. Now let $\delta\leq\lambda$ be a nonzero limit ordinal and we assume that the assertion holds for all
$\alpha<\delta$. Every element of $kG$ has finite support. Consequently, for each $x\in I_\delta$, there exists $\alpha<\delta$ such that $x\in F_\alpha$, and therefore $x\in I_\alpha$.
Thus $I_\delta=\bigcup_{\alpha<\delta}I_\alpha$, which is a $k$-submodule of $kG$, being the union of an increasing chain of $k$-submodules, and the required assertion follows from the induction
hypothesis. Taking $\alpha=\lambda$, we conclude that
$I=I_\lambda=\langle S\rangle_k$. Finally, since $S\subseteq I$ and $I$ is a left ideal, $I=\sum_{s\in S}ks
\subseteq\sum_{s\in S}(kG)s
\subseteq I$.
Hence $S$ also generates $I$ as a left ideal of $kG$,
and $|S|\leq\kappa=\aleph_m$. For the final assertion, if $|k|\leq\kappa$, then every ideal of $k$ has cardinality at most $\kappa$ and is generated by its own elements.
\end{proof}

\begin{corollary}\label{cor5.6}
Let $k$ be a commutative ring, $G$ be a group, and $m$ be a nonnegative integer such that $|G|\le\alephm$ and every ideal of $k$ is $\alephm$-generated. Then,
\[
\Ggl(kG)\;\le\;\Gwgl(kG)+m+1\;=\;\sfli(kG)+m+1 .
\]
\end{corollary}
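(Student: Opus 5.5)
The argument has two steps: Lemma~\ref{lem5.5} supplies the hypothesis of Corollary~\ref{cor4.5}, and the isomorphism $kG\cong(kG)^{\mathrm{op}}$ turns the Gorenstein weak global dimension into $\sfli(kG)$.

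By Lemma~\ref{lem5.5}, the assumptions $|G|\le\alephm$ and every ideal of $k$ being $\alephm$-generated imply that every left ideal of $kG$ is $\alephm$-generated. Corollary~\ref{cor4.5}, applied to $R=kG$, then gives
\[
\Ggl(kG)=\mathrm{PGF\text{-}gl.dim}\,(kG)\le\Gwgl(kG)+m+1 .
\]
If $\Gwgl(kG)=\infty$, this inequality is vacuous, but the full statement still needs the equality on the right.

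For the equality $\Gwgl(kG)=\sfli(kG)$, I would use the anti-automorphism $g\mapsto g^{-1}$, extended $k$-linearly. It is an isomorphism $kG\cong(kG)^{\mathrm{op}}$, so restricting along it identifies left and right $kG$-modules. This identification preserves injectivity and flat dimension, so $\sfli(kG)=\sfli((kG)^{\mathrm{op}})$.

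By \cite[Theorem~5.3]{Emm}, $\Gwgl(kG)<\infty$ if and only if $\sfli(kG)$ and $\sfli((kG)^{\mathrm{op}})$ are both finite, and in that case the three invariants agree. Since the two $\sfli$'s coincide, $\Gwgl(kG)$ and $\sfli(kG)$ are finite simultaneously and equal when finite, so $\Gwgl(kG)=\sfli(kG)$ in all cases. This was already observed in the subsection on the invariants $\sfli$ and $\Gwgl$, and the left--right symmetry of $\Gwgl$ from \cite[Corollary~1.5]{CET} is consistent with it.

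The only point requiring care is this equality when the invariants are infinite. It is handled by the "finite simultaneously" clause above, which rests on $\sfli(kG)=\sfli((kG)^{\mathrm{op}})$ via the group-algebra anti-automorphism.
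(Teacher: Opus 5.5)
Your proposal is correct and follows the paper's proof: Lemma~\ref{lem5.5} supplies the hypothesis of Corollary~\ref{cor4.5} for $R=kG$, and $kG\cong(kG)^{\mathrm{op}}$ together with \cite[Theorem~5.3]{Emm} gives $\Gwgl(kG)=\sfli(kG)$, as recorded in Section~\ref{sec:prelim}. Your explicit check that the equality also holds when the invariants are infinite spells out a step the paper leaves implicit.
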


\begin{proof} By Lemma~\ref{lem5.5}, every left ideal of $kG$ is an $\alephm$-generated $kG$-module, so Corollary~\ref{cor4.5} applies to $R=kG$ and yields $\Ggl(kG)\le\Gwgl(kG)+m+1$. Finally, $kG\cong(kG)^{\mathrm{op}}$ and hence $\Gwgl(kG)=\sfli(kG)$ (see Section~\ref{sec:prelim}).
\end{proof}

Combined with the estimate of $\sfli(kG)$ obtained in \cite{KS}, Corollary~\ref{cor5.6} yields a bound on the Gedrich--Gruenberg invariants $\spli(kG)$ and $\silp(kG)$ of the group algebra in terms of the Gorenstein homological dimension of the group itself.

\begin{corollary}\label{cor5.7}
Let $k$ be a commutative ring, $G$ be a group, and $m$ be a nonnegative integer such that $|G|\le\alephm$ and every ideal of $k$ is $\alephm$-generated. Then, \[\silp(kG)\le \spli(kG)\le\sfli(kG)+m+1\le\Ghd_kG+\sfli k+m+1 .\]
In particular, if $k$ is a field, then $\silp(kG)\le\spli(kG)\le\Ghd_kG+m+1$.
\end{corollary}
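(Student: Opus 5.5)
The chain has three inequalities, and I would treat them separately.

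The first one, $\silp(kG)\le\spli(kG)$, is a general fact about group algebras. I would argue as follows, assuming $\spli(kG)=s<\infty$ (otherwise there is nothing to prove). The algebra $kG$ is isomorphic to its opposite, so left and right modules can be interchanged. Let $P$ be a projective $kG$-module; it suffices to treat $P=kG$ and its direct summands, but the cleaner route is the standard duality argument of Gedrich--Gruenberg. If $\spli(kG)<\infty$, then by \cite[Theorem~4.1]{Emm} (more precisely, the known comparison for group algebras, where $\silp$ is controlled by $\spli$ through the isomorphism $\Hom_k(kG,I)\cong$ coinduced modules) one gets $\silp(kG)\le\spli(kG)$. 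I would quote this inequality from the literature on Gedrich--Gruenberg invariants of group algebras rather than reprove it, namely the fact, recorded in \cite{ET2,Emm}, that $\silp(kG)\le\spli(kG)$ for any group algebra.

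For the middle inequality, assume $\sfli(kG)<\infty$. Then $\Gwgl(kG)=\sfli(kG)<\infty$ by Section~\ref{sec:prelim}, and Corollary~\ref{cor5.6} gives $\Ggl(kG)\le\sfli(kG)+m+1<\infty$. By \cite[Theorem~4.1]{Emm}, finiteness of $\Ggl(kG)$ forces $\spli(kG)=\silp(kG)=\Ggl(kG)$. Hence $\spli(kG)\le\sfli(kG)+m+1$. This argument also recovers $\silp\le\spli$ (with equality) whenever the right-hand side is finite, which is the only case that matters for the displayed chain.

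The last inequality, $\sfli(kG)\le\Ghd_kG+\sfli k$, is exactly \cite[Corollary~6.2]{KS}, which I would quote directly. For the field case, note that $\sfli k=0$ when $k$ is a field and that every ideal of a field is finitely generated. Substituting into the chain gives $\silp(kG)\le\spli(kG)\le\Ghd_kG+m+1$.

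The only delicate point is the first inequality when everything is infinite; it is trivially true if $\spli(kG)=\infty$. In the finite case it follows from \cite[Theorem~4.1]{Emm} together with the middle step, so the main obstacle reduces to quoting the correct result for $\silp\le\spli$ in general.
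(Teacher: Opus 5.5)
Your treatment of the middle and right inequalities is the paper's: Corollary~\ref{cor5.6} together with \cite[Theorem~4.1]{Emm} for the middle one, and \cite[Corollary~6.2]{KS} for the right one. In the field case, substituting $\sfli k=0$ into the chain is enough; the paper additionally observes that $\sfli(kG)=\Ghd_kG$.

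The difference lies in the left inequality $\silp(kG)\le\spli(kG)$. The paper quotes \cite[Corollary~24]{DE}, using $kG\cong(kG)^{\mathrm{op}}$, which gives this inequality with no cardinality hypothesis. Your first paragraph does not establish it. \cite[Theorem~4.1]{Emm} only applies once $\Ggl(kG)$ is known to be finite (equivalently, once both $\spli(kG)$ and $\silp(kG)$ are), so $\spli(kG)<\infty$ alone yields nothing. The appeal to \cite{ET2,Emm} for ``any group algebra'' is also not a verified reference.

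Your last paragraph, however, already proves the inequality without outside citation. Present that as the proof and drop the remark about a remaining ``obstacle.'' If $\spli(kG)=\infty$, there is nothing to prove. If $\spli(kG)<\infty$, then $\sfli(kG)\le\spli(kG)<\infty$, because $\fd\le\pd$ for every module. You must state this step explicitly, since it is also what justifies your claim that the case $\sfli(kG)<\infty$ is ``the only case that matters.'' Corollary~\ref{cor5.6} then gives $\Ggl(kG)<\infty$, and \cite[Theorem~4.1]{Emm} yields $\silp(kG)=\spli(kG)$.

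This route uses the corollary's cardinality hypotheses through Corollary~\ref{cor5.6}. In exchange it gives the sharper conclusion $\silp(kG)=\spli(kG)$ whenever $\spli(kG)<\infty$. The paper's route makes the left inequality independent of those hypotheses, but relies on the external result \cite[Corollary~24]{DE}.
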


\begin{proof}
The right inequality follows from the estimate $\sfli(kG)\le\Ghd_kG+\sfli k$ of \cite[Corollary~6.2]{KS}. Moreover, the left inequality holds by \cite[Corollary~24]{DE}, since $kG\cong(kG)^{\mathrm{op}}$. For the middle inequality, we may assume that $\sfli(kG)<\infty$, since otherwise there is nothing to prove. Then, Corollary~\ref{cor5.6} yields the finiteness of $\Ggl(kG)$. Consequently, $\Ggl(kG)=\spli(kG)$ (see \cite[Theorem~4.1]{Emm}) and the middle inequality follows from Corollary~\ref{cor5.6}. Finally, if $k$ is a field, then $\sfli k=0$, and the two-sided estimate $\max\{\Ghd_kG,\sfli k\}\le\sfli(kG)\le\Ghd_kG+\sfli k$ of \cite[Corollary~6.2]{KS} gives $\sfli(kG)=\Ghd_kG$ and the last inequality follows.
\end{proof}

The estimates above yield a finiteness criterion: for a group of cardinality at most $\alephm$, the Gedrich--Gruenberg invariants of the group algebra are finite precisely when the Gorenstein dimensions of the group are.

\begin{corollary}\label{cor5.8} Let $k$ be a commutative ring with $\sfli k<\infty$, $G$ be a group, and $m$ be a nonnegative integer such that $|G|\le\alephm$ and every ideal of $k$ is $\alephm$-generated. Then, the following conditions are equivalent:
\begin{enumerate}
\item[(i)] $\Ghd_kG<\infty$;
\item[(ii)] $\Gcd_kG=\PGFcd_kG<\infty$;
\item[(iii)] $\sfli(kG)<\infty$;
\item[(iv)] $\spli(kG)<\infty$;
\item[(v)] $\Gwgl(kG)<\infty$;
\item[(vi)] $\Ggl(kG)<\infty$.
\end{enumerate}
\end{corollary}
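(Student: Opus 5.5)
I would prove the six conditions equivalent by arranging them in two chains, a group-theoretic one, (i)$\Leftrightarrow$(ii), and a ring-theoretic one, (iii)$\Leftrightarrow$(iv)$\Leftrightarrow$(v)$\Leftrightarrow$(vi), and then linking the chains through (i)$\Leftrightarrow$(iii).

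\emph{(i)$\Leftrightarrow$(ii).} If $\Ghd_kG<\infty$, then Theorem~\ref{theoB} gives $\Gcd_kG\le\PGFcd_kG\le\Ghd_kG+m+1<\infty$. Since $\sfli k<\infty$, \cite[Proposition~2.13]{St0} gives the equality $\Gcd_kG=\PGFcd_kG$. Conversely, if $\Gcd_kG<\infty$, then \cite[Theorem~3.10]{KS}, which needs $\sfli k<\infty$, gives $\Ghd_kG\le\Gcd_kG<\infty$.

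\emph{(i)$\Leftrightarrow$(iii).} By \cite[Corollary~6.2]{KS} we have $\max\{\Ghd_kG,\sfli k\}\le\sfli(kG)\le\Ghd_kG+\sfli k$. As $\sfli k<\infty$, the two quantities $\sfli(kG)$ and $\Ghd_kG$ are finite simultaneously.

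\emph{(iii)$\Leftrightarrow$(v).} Since $kG\cong(kG)^{\mathrm{op}}$, Section~\ref{sec:prelim} gives $\Gwgl(kG)=\sfli(kG)$, with both sides finite or infinite together.

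\emph{The ring-theoretic chain.} If (iii) holds, Corollary~\ref{cor5.6} (which uses the cardinality hypotheses through Lemma~\ref{lem5.5}) gives $\Ggl(kG)\le\sfli(kG)+m+1<\infty$, so (vi) holds. If (vi) holds, \cite[Theorem~4.1]{Emm} gives $\spli(kG)=\Ggl(kG)<\infty$, so (iv) holds. Note that Corollary~\ref{cor5.7} already gives (iii)$\Rightarrow$(iv) directly.

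\emph{The remaining step, (iv)$\Rightarrow$(iii) or (vi).} This is the only step that needs real care. My first attempt would use $\silp(kG)\le\spli(kG)$ from \cite[Corollary~24]{DE}. If $\spli(kG)<\infty$, then $\silp(kG)<\infty$ as well, and \cite[Theorem~4.1]{Emm} gives $\Ggl(kG)<\infty$, which is (vi). It then remains to close the loop with (vi)$\Rightarrow$(v). For this I would use the inequality $\Gwgl R\le\Ggl R$, in the form $\sfli R\le\spli R$: an injective module of finite projective dimension has finite flat dimension bounded by it. This gives $\sfli(kG)\le\spli(kG)<\infty$. Alternatively, one could pass through $k$ as the trivial module, since $\Gcd_kG\le\Ggl(kG)<\infty$, and then use (ii)$\Rightarrow$(i).

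With these implications all six conditions are equivalent.
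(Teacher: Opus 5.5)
Your proposal is correct and follows essentially the same route as the paper: (i)$\Leftrightarrow$(ii) via Theorem~\ref{theoB}, \cite[Proposition~2.13]{St0} and \cite[Theorem~3.10]{KS}; (i)$\Leftrightarrow$(iii) via \cite[Corollary~6.2]{KS}; (iii)$\Rightarrow$(vi) via Corollary~\ref{cor5.6}; (vi)$\Rightarrow$(iv) via \cite[Theorem~4.1]{Emm}; and (iii)$\Leftrightarrow$(v) via $kG\cong(kG)^{\mathrm{op}}$. The one difference is that the paper gets (iv)$\Rightarrow$(iii) directly from $\sfli(kG)\le\spli(kG)$ (flat dimension never exceeds projective dimension), which you invoke anyway, so your detour through $\silp(kG)$, \cite[Corollary~24]{DE} and (vi) is unnecessary.
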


\begin{proof} Since $\sfli k<\infty$, we have $\Ghd_kG\le\Gcd_kG=\PGFcd_kG\le\Ghd_kG+m+1$, by \cite[Theorem~3.10]{KS}, \cite[Proposition 2.13]{St0} and Theorem~\ref{theoB}. Consequently, the conditions (i) and (ii) are equivalent. Since $\sfli(kG)\le\spli(kG)$, invoking \cite[Corollary~6.2]{KS} and Corollary~\ref{cor5.7}, we obtain a chain of inequalities
\[
\Ghd_kG\;\le\;\sfli(kG)\;\le\;\spli(kG)\;\le\;\sfli(kG)+m+1\;\le\;\Ghd_kG+\sfli k+m+1 ,
\]
whose outer terms are finite simultaneously, since $\sfli k<\infty$; this proves the equivalence of (i), (iii) and (iv). Moreover, Corollary~\ref{cor5.6} gives $\Ggl(kG)\le\sfli(kG)+m+1$, so that (iii) implies (vi). Conversely, if $\Ggl(kG)<\infty$, then $\spli(kG)=\silp(kG)=\Ggl(kG)<\infty$ (see \cite[Theorem~4.1]{Emm}), which shows that (vi) implies (iv). Finally, since $kG\cong(kG)^{\mathrm{op}}$, we have $\Gwgl(kG)=\sfli(kG)$ (see Section~\ref{sec:prelim}), so that (iii) and (v) are equivalent. \end{proof}

\begin{remark}\label{rem5.9}
In contrast to Theorem~\ref{theoB}, Corollaries~\ref{cor5.6} and \ref{cor5.7} require a hypothesis on the ideals of $k$, as well as a bound on the cardinality of $G$. The bar resolution provides a projective resolution of the trivial module $k$ with $\alephm$-generated terms regardless of the cardinality of $k$, whereas the left ideals of $kG$ need not be $\alephm$-generated when the ideals of $k$ are not.\end{remark}

\appendix

\section{A characterization of the PGF-dimension}\label{sec:appendix}

The proof of Lemma~\ref{lem4.1}, which constitutes the main tool of this paper, rests on a single principle: a module which is periodic modulo a module of projective dimension at most $n$, and whose $(n+1)$-st $\Tor$ against the injective modules vanishes, has PGF-dimension at most $n$. We prove this here, in the form of a characterization of the modules of PGF-dimension at most $n$ as the direct summands of the periodic modules just described (see Theorem~\ref{theoA.5}). The case $n=0$ recovers the description of the PGF modules as the direct summands of the strongly PGF ones, which follows from \cite[Lemma~4.1]{SS} together with the closure of $\PGF(R)$ under direct summands. Everything is deduced from the properties of the PGF-dimension collected in Proposition~\ref{prop2.3}. Throughout, $R$ is an arbitrary ring and $n$ is a nonnegative integer.

\begin{definition}\label{defA.1}
An $R$-module $M$ is \emph{strongly $n$-PGF} if there exists a short exact sequence of $R$-modules
\[
0\rightarrow M\rightarrow F\rightarrow M\rightarrow0
\]
with $\pd_RF\le n$ and $\Tor^R_{n+1}(I,M)=0$ for every injective right $R$-module $I$.
\end{definition}

For $n=0$ the condition reads: there is a short exact sequence $0\to M\to F\to M\to0$ with $F$ projective which remains exact after applying $I\otimes_R-$ for every injective right $R$-module $I$. We call such a module \emph{strongly PGF}, in analogy with the strongly Gorenstein projective modules of Bennis--Mahdou \cite{BM}. On the other hand, every module $M$ with $\pd_RM\le n$ is strongly $n$-PGF, as the split sequence $0\to M\to M\oplus M\to M\to0$ shows.

\begin{proposition}\label{propA.2}
For an $R$-module $M$ the following assertions are equivalent:
\begin{enumerate}
\item[(i)] $M$ is strongly $n$-PGF;
\item[(ii)] there is a short exact sequence $0\to M\to F\to M\to0$ with $\pd_RF\le n$ such that $\Tor^R_i(I,M)=0$ for every $i>n$ and every injective right $R$-module $I$.
\end{enumerate}
\end{proposition}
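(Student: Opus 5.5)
The implication (ii)$\Rightarrow$(i) is immediate: the vanishing of $\Tor^R_i(I,M)$ for every $i>n$ contains, as the case $i=n+1$, the condition in Definition~\ref{defA.1}. The substance lies in (i)$\Rightarrow$(ii). We keep the same short exact sequence $0\to M\to F\to M\to0$ and show that the single vanishing $\Tor^R_{n+1}(I,M)=0$ propagates to all higher degrees.

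Fix an injective right $R$-module $I$ and apply $I\otimes_R-$ to the sequence. This gives the long exact sequence
\[
\cdots\to\Tor^R_{i}(I,M)\to\Tor^R_i(I,F)\to\Tor^R_i(I,M)\to\Tor^R_{i-1}(I,M)\to\Tor^R_{i-1}(I,F)\to\cdots.
\]
Since $\pd_RF\le n$, we have $\fd_RF\le n$, and hence $\Tor^R_i(I,F)=0$ for all $i>n$. For every $i\ge n+2$ both $i$ and $i-1$ exceed $n$, so the two flanking $F$-terms vanish. The connecting homomorphism $\Tor^R_i(I,M)\to\Tor^R_{i-1}(I,M)$ is therefore an isomorphism for every $i\ge n+2$.

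Starting from $\Tor^R_{n+1}(I,M)=0$, induction on $i$ now gives $\Tor^R_i(I,M)\cong\Tor^R_{n+1}(I,M)=0$ for every $i\ge n+1$. Since $I$ was an arbitrary injective right $R$-module, this is exactly (ii).

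There is no real obstacle here. The only point needing care is the index bookkeeping: the isomorphism must start at $i=n+2$, which requires $\Tor^R_{n+1}(I,F)=0$ as well as $\Tor^R_{n+2}(I,F)=0$. Both hold because $\fd_RF\le\pd_RF\le n$.
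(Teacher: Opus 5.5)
Your proof is correct and is essentially the paper's argument. The paper likewise keeps the sequence from Definition~\ref{defA.1}, uses $\pd_RF\le n$ to kill the $F$-terms of the long exact $\Tor$ sequence in all degrees above $n$, and propagates $\Tor^R_{n+1}(I,M)=0$ upward through the isomorphisms $\Tor^R_{i+1}(I,M)\cong\Tor^R_i(I,M)$ for $i>n$.
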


\begin{proof}
Only (i)$\Rightarrow$(ii) requires an argument, and the sequence provided by Definition~\ref{defA.1} will serve. Let $I$ be an injective right $R$-module. Since $\pd_RF\le n$, we have $\Tor^R_i(I,F)=0$ for every $i>n$, so that the part $\Tor^R_{i+1}(I,F)\rightarrow\Tor^R_{i+1}(I,M)\rightarrow\Tor^R_i(I,M)\rightarrow\Tor^R_i(I,F)$
of the long exact $\Tor$ sequence associated with the short exact sequence $0\to M\to F\to M\to0$ has vanishing outer terms for every $i>n$. Consequently, $\Tor^R_{i+1}(I,M)\cong\Tor^R_i(I,M)$ for every $i>n$, and since $\Tor^R_{n+1}(I,M)=0$, we deduce that $\Tor^R_i(I,M)=0$ for every $i\ge n+1$.
\end{proof}

\begin{lemma}\label{lemA.3}
Every strongly PGF module is PGF.
\end{lemma}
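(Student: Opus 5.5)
Starting from a strongly PGF module $M$, I will fix a short exact sequence $0\to M\xrightarrow{f}P\xrightarrow{g}M\to0$ with $P$ projective. By Definition~\ref{defA.1} with $n=0$, this sequence has $\Tor^R_1(I,M)=0$ for every injective right $R$-module $I$; by Proposition~\ref{propA.2}, in fact $\Tor^R_i(I,M)=0$ for all $i\ge1$. The plan is to splice infinitely many copies of this sequence into the $\Z$-indexed periodic complex
\[
\mathbf P:\ \cdots\rightarrow P\xrightarrow{f\circ g}P\xrightarrow{f\circ g}P\rightarrow\cdots .
\]
At every position its kernel equals $\im f\cong M$ and its image equals $f(M)$, so $\mathbf P$ is an acyclic complex of projective modules and $M$ is one of its cycles.

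Next I will check the tensor condition in Definition~\ref{def2.2}. Fix an injective right $R$-module $I$. Since $\Tor^R_1(I,M)=0$, the sequence
\[
0\to I\otimes_RM\to I\otimes_RP\to I\otimes_RM\to0
\]
is exact. The complex $I\otimes_R\mathbf P$ is obtained by splicing copies of this exact sequence, because tensoring the splicing short exact sequences and recomposing gives the differential $1_I\otimes(f\circ g)=(1_I\otimes f)(1_I\otimes g)$. A complex spliced from short exact sequences is acyclic, so $I\otimes_R\mathbf P$ is acyclic for every such $I$.

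It follows that $\mathbf P$ witnesses that $M$ is PGF in the sense of Definition~\ref{def2.2}. The only point needing care is the identification of the kernels and images in $\mathbf P$ and in $I\otimes_R\mathbf P$. Both follow because $f$ is a monomorphism and $g$ is an epimorphism, and, after tensoring, because $1_I\otimes f$ is a monomorphism and $1_I\otimes g$ is an epimorphism. I do not expect any genuine obstacle here.
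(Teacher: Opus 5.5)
Your proposal is correct and follows essentially the same argument as the paper. You splice $0\to M\to P\to M\to0$ into the periodic complex with differential $f\circ g$, and you use $\Tor^R_1(I,M)=0$ to see that the tensored short exact sequences splice into an acyclic complex. The appeal to Proposition~\ref{propA.2} for the vanishing of the higher $\Tor$ groups is harmless but not needed.
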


\begin{proof}
Let $N$ be strongly PGF and let $0\rightarrow N\xrightarrow{\alpha}Q\xrightarrow{\beta}N\rightarrow 0$ be a short exact sequence with $Q$ projective and $\Tor^R_1(I,N)=0$ for every injective right $R$-module $I$. Splicing this sequence with itself in both directions yields the acyclic complex of projective modules
\[
\mathbf Q:\quad\cdots\rightarrow Q\xrightarrow{\alpha\beta}Q\xrightarrow{\alpha\beta}Q\rightarrow\cdots,
\]
all of whose cycles are isomorphic to $N$. For an injective right $R$-module $I$ the vanishing $\Tor^R_1(I,N)=0$ makes $0\to I\otimes_RN\to I\otimes_RQ\to I\otimes_RN\to0$ exact, and splicing these back together shows that $I\otimes_R\mathbf Q$ is acyclic. Thus $N$ is a cycle of an acyclic complex of projectives which stays acyclic under $I\otimes_R-$, i.e.\ $N$ is PGF.
\end{proof}

\begin{proposition}\label{propA.4}
Let $M$ be a strongly $n$-PGF $R$-module.
\begin{enumerate}
\item[(i)] If $n\ge1$ and $0\to N\to P_{n-1}\to\cdots\to P_0\to M\to0$ is an exact sequence with $P_0,\dots,P_{n-1}$ projective, then $N$ is strongly PGF and consequently $\PGFdim_RM\le n$; this last inequality also holds for $n=0$, by Lemma~\ref{lemA.3}.\item[(ii)] If moreover $0\to M\to F\to M\to0$ is a short exact sequence with $\pd_RF<\infty$, then $\PGFdim_RM=\pd_RF$. In particular, $M$ is strongly $q$-PGF for $q:=\pd_RF$.
\end{enumerate}
\end{proposition}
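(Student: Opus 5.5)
The plan is to prove (i) by a horseshoe argument that turns the periodic sequence for $M$ into a periodic sequence for its $n$-th syzygy, and (ii) by comparing dimensions along the sequence $0\to M\to F\to M\to0$ using Proposition~\ref{prop2.3}.

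For (i), I fix a short exact sequence $0\to M\to F\to M\to0$ with $\pd_RF\le n$, as in Definition~\ref{defA.1}, together with the given exact sequence $0\to N\to P_{n-1}\to\cdots\to P_0\to M\to0$.
\begin{enumerate}
\item[(1)] Apply the horseshoe lemma, using the given partial resolution on both ends. This yields an exact sequence $0\to K\to P_{n-1}\oplus P_{n-1}\to\cdots\to P_0\oplus P_0\to F\to0$ and a short exact sequence $0\to N\to K\to N\to0$.
\item[(2)] Since $\pd_RF\le n$, the $n$-th syzygy $K$ of $F$ is projective, by the standard Schanuel/dimension-shifting characterisation of projective dimension.
\item[(3)] Dimension shifting along the resolution of $M$ gives $\Tor^R_1(I,N)\cong\Tor^R_{n+1}(I,M)=0$ for every injective right module $I$.
\end{enumerate}
Hence $N$ is strongly PGF. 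By Lemma~\ref{lemA.3}, $N$ is PGF, and Proposition~\ref{prop2.3}(ii) gives $\PGFdim_RM\le n$. For $n=0$ the module $M$ is itself strongly PGF, so Lemma~\ref{lemA.3} applies directly.

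For (ii), let $q=\pd_RF<\infty$. By Proposition~\ref{propA.2}, $\Tor^R_i(I,M)=0$ for all $i>n$. The long exact $\Tor$ sequence of $0\to M\to F\to M\to0$, combined with $\Tor^R_i(I,F)=0$ for $i>q$, gives $\Tor^R_{i+1}(I,M)\cong\Tor^R_i(I,M)$ for all $i>q$. This is the same argument as in the proof of Proposition~\ref{propA.2}.

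Iterating this isomorphism, $\Tor^R_{q+1}(I,M)\cong\Tor^R_i(I,M)$ for every $i\ge q+1$. Taking $i>n$, the right-hand side vanishes, so $\Tor^R_{q+1}(I,M)=0$. Thus $M$ is strongly $q$-PGF, which is the last claim of (ii). Applying (i) with $q$ in place of $n$ then gives $\PGFdim_RM\le q$.

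For the reverse inequality, set $d=\PGFdim_RM$, which is now finite. Proposition~\ref{prop2.3}(iv)(a), applied to $0\to M\to F\to M\to0$, gives $\PGFdim_RF\le d$. Since $F$ has finite projective dimension, Proposition~\ref{prop2.3}(v) gives $\PGFdim_RF=\pd_RF=q$. Hence $q\le d$, and therefore $\PGFdim_RM=q$.

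The only step that is more than bookkeeping is the horseshoe step in (i): checking that the $n$-th syzygy $K$ of $F$ is projective, and that $N$ sits in a short exact sequence $0\to N\to K\to N\to0$ with the original $N$ at both ends. I would carry this out by applying the horseshoe lemma to the chosen partial resolutions of length $n$ of the two copies of $M$, and then invoking Schanuel's lemma for $F$.
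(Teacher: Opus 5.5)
Your proposal is correct and follows essentially the same route as the paper. For (i), the horseshoe lemma applied to $0\to M\to F\to M\to0$ yields $0\to N\to Q\to N\to0$ with $Q$ projective and $\Tor^R_1(I,N)\cong\Tor^R_{n+1}(I,M)=0$. For (ii), the $\Tor$-periodicity above $q=\pd_RF$ shows $M$ is strongly $q$-PGF, and then Proposition~\ref{prop2.3}(iv)(a) and (v) give $q\le\PGFdim_RM\le q$.

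The only minor difference is in (ii). You get $\Tor^R_i(I,M)=0$ for $i>n$ directly from Proposition~\ref{propA.2} applied to $M$. The paper instead passes through the strongly PGF syzygy $N$ via $\Tor^R_{n+i}(I,M)\cong\Tor^R_i(I,N)$, which amounts to the same thing.
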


\begin{proof}
(i) Choose a short exact sequence $0\to M\to F\to M\to0$ as in Definition~\ref{defA.1}, so that $\pd_RF\le n$ and $\Tor^R_{n+1}(I,M)=0$ for every injective right $R$-module $I$. The given sequence is a truncated projective resolution of $M$; applying the horseshoe lemma to $0\to M\to F\to M\to0$ with this resolution at both ends produces a commutative diagram with exact rows and columns
\[
\begin{array}{ccccccccccccc}
&   &0& &0& & & &0& &0& & \\&   &\downarrow& &\downarrow& & & &\downarrow& &\downarrow& & \\
0&\to&N&\to&P_{n-1}&\to&\cdots&\to&P_0&\to&M&\to&0\\
 &   &\downarrow& &\downarrow& & & &\downarrow& &\downarrow& & \\
0&\to&Q&\to&P_{n-1}\oplus P_{n-1}&\to&\cdots&\to&P_0\oplus P_0&\to&F&\to&0\\
 &   &\downarrow& &\downarrow& & & &\downarrow& &\downarrow& & \\
0&\to&N&\to&P_{n-1}&\to&\cdots&\to&P_0&\to&M&\to&0\\
&   &\downarrow& &\downarrow& & & &\downarrow& &\downarrow& & \\
&   &0& &0& & & &0& &0& & \\
\end{array}
\]
whose middle row is a truncated projective resolution of $F$ and whose left-hand column is a short exact sequence $0\to N\to Q\to N\to0$. As $\pd_RF\le n$, the $R$-module $Q$ is projective. Moreover, dimension shifting along the top row gives $\Tor^R_1(I,N)\cong\Tor^R_{n+1}(I,M)=0$ for every injective right $R$-module $I$, and therefore $N$ is strongly PGF. By Lemma~\ref{lemA.3} the module $N$ is then PGF, so that $$0\to N\to P_{n-1}\to\cdots\to P_0\to M\to0$$ is a PGF-resolution of $M$ of length $n$ and $\PGFdim_RM\le n$. For $n=0$ the module $M$ is itself strongly PGF, hence PGF by Lemma~\ref{lemA.3}, and therefore $\PGFdim_RM=0$. 

(ii) Let $q=\pd_RF<\infty$. We consider a truncated projective resolution of the module $M$ of length $n$,
\[
0\rightarrow N\rightarrow P_{n-1}\rightarrow\cdots\rightarrow P_0\rightarrow M\rightarrow0 .
\]
Using part (i), which we have already proved, we obtain that $N$ is strongly PGF. By Proposition~\ref{propA.2}(ii), there exists a short exact sequence $0\rightarrow N\rightarrow P\rightarrow N\rightarrow0$ such that $P$ is projective and $\Tor^R_i(I,N)=0$ for every $i>0$ and every injective right $R$-module $I$. Let $I$ be an injective right $R$-module. The short exact sequence $0\rightarrow M\rightarrow F\rightarrow M\rightarrow0$ induces a long exact sequence of the form
\[
\cdots\rightarrow\Tor^R_{i+1}(I,F)\rightarrow\Tor^R_{i+1}(I,M)\rightarrow\Tor^R_i(I,M)\rightarrow\Tor^R_i(I,F)\rightarrow\cdots,
\]
implying that $\Tor^R_{i+1}(I,M)\cong\Tor^R_i(I,M)$ for every $i>q$. Thus,
\[
\Tor^R_i(I,M)\cong\Tor^R_{n+i}(I,M)\cong\Tor^R_i(I,N)=0
\]
for every $i>q$, the second isomorphism being given by dimension shifting along the truncated projective resolution above. By Proposition~\ref{propA.2}(ii), we conclude that $M$ is strongly $q$-PGF. It remains to prove that $\PGFdim_RM=q$. By part (i) we have $\PGFdim_RM\le q$. Since $\pd_RF=q<\infty$, Proposition~\ref{prop2.3}(v) implies that $\PGFdim_RF=\pd_RF=q$. Using Proposition~\ref{prop2.3}(iv)(a) and the short exact sequence $0\rightarrow M\rightarrow F\rightarrow M\rightarrow0$, we get the inequality $q=\PGFdim_RF\le\PGFdim_RM$. We conclude that $\PGFdim_RM=q$.
\end{proof}

We can now prove the characterization of the modules of finite PGF-dimension announced above.

\begin{theorem}\label{theoA.5}
Let $M$ be an $R$-module and $n\ge0$ an integer. Then $\PGFdim_RM\le n$ if and only if $M$ is a direct summand of a strongly $n$-PGF module.
\end{theorem}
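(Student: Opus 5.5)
The two implications are handled separately. The ``if'' direction is immediate from results already established. If $M$ is a direct summand of a strongly $n$-PGF module $N$, then Proposition~\ref{propA.4}(i) gives $\PGFdim_RN\le n$. Proposition~\ref{prop2.3}(iii) says the PGF-dimension of a direct sum is the supremum over its summands. Writing $N=M\oplus M'$, we get $\PGFdim_RM\le\PGFdim_RN\le n$.

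For the ``only if'' direction, assume $\PGFdim_RM\le n$. I would use the characterization of Proposition~\ref{prop2.3}(vi): there is a short exact sequence $0\to M\to D\to G\to0$ with $G$ PGF and $\pd_RD\le n$. Since $G$ is PGF, it is a cycle of an acyclic complex of projectives $\mathbf P$ which stays acyclic after applying $I\otimes_R-$ for injective right modules $I$. As in Lemma~\ref{lemA.3} (or \cite[Lemma~4.1]{SS}), summing the short exact sequences $0\to Z_i\to P_i\to Z_{i-1}\to0$ over $i\in\Z$ gives a sequence $0\to G'\to P\to G'\to0$. Here $G'=\bigoplus_iZ_i$ contains $G$ as a direct summand and $P$ is projective. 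The aim is to build a similar periodic sequence for a module containing $M$ as a summand. Rather than working with $D$, it is cleaner to pass to a projective resolution. Take $0\to K\to P_{n-1}\to\cdots\to P_0\to M\to0$ with $P_j$ projective; by Proposition~\ref{prop2.3}(ii)(d), $K$ is PGF. Then $K$ is a cycle of a complex $\mathbf Q$ as above, and by splicing, $M$ itself becomes the cokernel of a map in a complex that is projective in degrees $\ge 0$ and continues to the right with the cycles of $\mathbf Q$ shifted.

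The cleanest construction I would try is the following. Choose an acyclic complex $\mathbf Q$ of projectives, $I\otimes_R-$-acyclic, with $K=\ker(Q_0\to Q_{-1})$. Consider the complex $\mathbf E$ obtained by gluing $\cdots\to Q_1\to Q_0$ (ending in $K$) with the resolution, namely
\[\cdots\to Q_1\to Q_0\to P_{n-1}\to\cdots\to P_0\to M\to 0.\]
This does not quite work, since $Q_0$ maps onto the image of $K$ only. The better approach is a periodicity argument via sums. Let $Z_i$ be the cycles of $\mathbf Q$, all PGF. For each $i$ there is an exact sequence $0\to Z_i\to Q_i\to\cdots$, and each $Z_i$ has an ``$n$-th cosyzygy'' $W_i$. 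To produce $W_i$, take the pushout of $0\to K\to P_{n-1}$ along the embedding $K\to Q_0$, and iterate; for a general cycle $Z_i$, take a pushout of $Z_i\to Q_i$. I find it simpler to define $W_i$ as the cokernel of $Z_{i+n}\to Q_{i+n-1}\oplus\cdots$; concretely, take $W_i=Z_{i}$ for the $\mathbf Q$-part, and use $M$ in the slot corresponding to $K$. Honestly, the cleanest route is to set
\[N=M\oplus\bigoplus_{i\ne 0}Z_{i},\]
chosen so that there is a sequence $0\to N\to F\to N\to0$ with $\pd_RF\le n$. To realize this, the shift map has to send $M$ to $Z_{-1}$ (or similar) via a sequence $0\to M\to D'\to Z_{-1}\to0$ with $\pd D'\le n$. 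That sequence is exactly what Proposition~\ref{prop2.3}(vi) produces once the construction is done compatibly with $\mathbf Q$. The plan is therefore to use the pushout of $K\hookrightarrow Q_0$ along $K\hookrightarrow P_{n-1}$ to obtain $0\to M\to D'\to Z_{-1}\to0$ with $\pd_RD'\le n$.

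Once that sequence is in hand, the remaining steps are direct. Summing it with the sequences $0\to Z_i\to Q_i\to Z_{i-1}\to0$ for $i\ne 0$ and reindexing gives $0\to N\to F\to N\to0$, with $F=D'\oplus\bigoplus_{i\neq0}Q_i$ and $\pd_RF\le n$. For the Tor condition, note that $\Tor^R_{n+1}(I,M)=0$ because $\Tor^R_{n+1}(I,M)\cong\Tor^R_1(I,K)=0$, as $K$ is Gorenstein flat. Likewise $\Tor^R_{n+1}(I,Z_i)=0$ for every $i$. Hence $N$ is strongly $n$-PGF and has $M$ as a direct summand. The main obstacle is making the indexing of this sum consistent: the pushout sequence must be inserted into the chain in place of the single step $0\to Z_0\to Q_0\to Z_{-1}\to 0$. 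Since $K=Z_0$ is replaced by $M$, the term $Z_0$ is omitted from $N$, and the reindexing then matches exactly.
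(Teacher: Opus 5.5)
Your ``if'' direction is correct and is the paper's argument. The ``only if'' direction has a genuine gap, in two places.

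\emph{The key sequence does not exist.} Everything rests on a sequence $0\to M\to D'\to Z_{-1}\to0$ with $\pd_RD'\le n$, and the pushout you propose does not produce it. Pushing out $P_{n-1}\hookleftarrow K\hookrightarrow Q_0$ gives $0\to Q_0\to E\to\Omega^{n-1}M\to0$ and $0\to P_{n-1}\to E\to Z_{-1}\to0$ (with $\Omega^0M=M$). Neither has $M$ as its left-hand term.

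The shift is wrong. Modulo modules of finite projective dimension, $M$ behaves like the $n$-th cosyzygy $Z_{-n}$ of $K$, not like $K=Z_0$. So the cokernel of such a sequence behaves like $Z_{-(n+1)}$, not $Z_{-1}$.

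A concrete counterexample: let $R=k[x]/(x^3)$ with $k$ a field, $n=1$ and $M=k$, so $K=(x)\cong R/(x^2)$. Here $\PGFdim_Rk=0$, since $R$ is quasi-Frobenius. A sequence $0\to k\to D'\to Z_{-1}\to0$ with $\pd_RD'\le1$ would force $D'$ projective, again because $R$ is quasi-Frobenius. Schanuel's lemma applied to it and to $0\to K\to Q_0\to Z_{-1}\to0$ would then give $k\oplus Q_0\cong R/(x^2)\oplus D'$. This is impossible: the additive invariant $X\mapsto\ker(x|_X)/(\ker(x|_X)\cap xX)$ vanishes on free modules (hence on projectives) and on $R/(x^2)$, but not on $k$.

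\emph{The sum does not close up.} Even granting that sequence, the case $i=1$, namely $0\to Z_1\to Q_1\to Z_0\to0$, puts $Z_0=K$ on the right. So the right-hand terms add up to $\bigoplus_{j\in\Z}Z_j$, which contains $K$ and not $M$, rather than to $N=M\oplus\bigoplus_{i\ne0}Z_i$. Repairing this would need an incoming sequence $0\to Z_1\to E\to M\to0$ with $\pd_RE<\infty$. That again forces $M$ and $K$ to agree modulo finite projective dimension, which fails in the example. So placing $M$ in the slot of $K$ inside a complete resolution of $K$ is the wrong idea, not a bookkeeping slip.

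\emph{The fix.} What works is the construction you set aside at the start, which is the paper's. Keep $0\to M\to D\to G^0\to0$ from Proposition~\ref{prop2.3}(vi). Continue to the right with the PGF cosyzygies $0\to G^i\to P^i\to G^{i+1}\to0$ ($i\ge0$) of $G^0$. Continue to the left with the \emph{whole} projective resolution of $M$: $0\to G_0\to P_0\to M\to0$ and $0\to G_j\to P_j\to G_{j-1}\to0$ ($j\ge1$). The syzygies $G_j$ need not be PGF, but $\PGFdim_RG_j\le n$ by Proposition~\ref{prop2.3}(iv)(b), which is all the Tor condition needs.

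Both tails are one-sided, so every module of the chain $\cdots\to G_1\to G_0\to M\to G^0\to G^1\to\cdots$ occurs exactly once as a left-hand term and once as a right-hand term, in the manner of an Eilenberg swindle. Summing gives $0\to N\to Q\to N\to0$ with $N=\bigoplus_iG^i\oplus M\oplus\bigoplus_jG_j$ and $\pd_RQ=\pd_RD\le n$. Finally, $\Tor^R_{n+1}(I,N)=0$ because $\PGFdim_RN\le n$, so the $n$-th syzygy of $N$ is PGF, hence Gorenstein flat.
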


\begin{proof}
Assume first that $M$ is a direct summand of a strongly $n$-PGF module $N$. Then $\PGFdim_RM\le\PGFdim_RN$ by Proposition~\ref{prop2.3}(iii), while $\PGFdim_RN\le n$ by Proposition~\ref{propA.4}(i). Consequently, $\PGFdim_RM\le n$. Conversely, suppose that $\PGFdim_RM\le n$. By Proposition~\ref{prop2.3}(vi) there exists a short exact sequence
\begin{equation}\label{eq:app-up}
0\rightarrow M\rightarrow D\rightarrow G^0\rightarrow0
\end{equation}
with $G^0$ PGF and $\pd_RD\le n$. Since $G^0$ is a cycle of an acyclic complex of projectives which remains acyclic under $I\otimes_R-$, its right-hand half provides an exact sequence $0\to G^0\to P^0\to P^1\to\cdots$ with all $P^i$ projective, whose images $G^{i+1}:=\im(P^i\to P^{i+1})$ are again PGF; thus we obtain short exact sequences
\begin{equation}\label{eq:app-co}
0\rightarrow G^i\rightarrow P^i\rightarrow G^{i+1}\rightarrow0,
\qquad i\ge0,
\end{equation}
with $\PGFdim_RG^i=0$ for every $i\ge0$. In the other direction, choose a projective resolution $\cdots\to P_1\to P_0\to M\to0$ of $M$ and let $G_0=\ker(P_0\to M)$ and $G_j=\ker(P_j\to P_{j-1})$ for $j\ge1$, so that the sequences
\begin{equation}\label{eq:app-down}
0\rightarrow G_0\rightarrow P_0\rightarrow M\rightarrow0,
\qquad
0\rightarrow G_j\rightarrow P_j\rightarrow G_{j-1}\rightarrow0\ \ (j\ge1)
\end{equation}
are exact. The second inequality of Proposition~\ref{prop2.3}(iv), applied to the first of these sequences, gives $\PGFdim_RG_0\le\max\{\PGFdim_RP_0,\PGFdim_RM-1\}\le n$, and an induction on $j$ using the remaining ones gives $\PGFdim_RG_j\le n$ for every $j\ge0$. We now add up the short exact sequences \eqref{eq:app-co}, \eqref{eq:app-up} and \eqref{eq:app-down}. Setting
\[
N=\Bigl(\bigoplus_{i\ge0}G^i\Bigr)\oplus M\oplus\Bigl(\bigoplus_{j\ge0}G_j\Bigr),
\qquad
Q=\Bigl(\bigoplus_{i\ge0}P^i\Bigr)\oplus D\oplus\Bigl(\bigoplus_{j\ge0}P_j\Bigr),
\]
the left-hand terms of these sequences add up to $N$, the middle terms to $Q$, and the right-hand terms to
\[
\Bigl(\bigoplus_{i\ge1}G^i\Bigr)\oplus G^0\oplus M\oplus\Bigl(\bigoplus_{j\ge0}G_j\Bigr)=N .
\]
We therefore obtain a short exact sequence $0\rightarrow N\rightarrow Q\rightarrow N\rightarrow0.$
All the summands of $Q$ except $D$ are projective, so $\pd_RQ=\pd_RD\le n$. Furthermore, $\PGFdim_RN\le n$ by Proposition~\ref{prop2.3}(iii), since each summand of $N$ has PGF-dimension at most $n$; hence the $n$-th syzygy $\Omega^nN$ of a projective resolution of $N$ is PGF by Proposition~\ref{prop2.3}(ii), in particular Gorenstein flat, and a dimension shifting argument gives $\Tor^R_{n+1}(I,N)\cong\Tor^R_1(I,\Omega^nN)=0$ for every injective right $R$-module $I$. Thus $N$ is strongly $n$-PGF, and $M$ is one of its direct summands.
\end{proof}


\begin{remark}
Let $M$ be an $R$-module with
$\PGFdim_RM=n<\infty$.
By Theorem~\ref{theoA.5}, there exists a strongly $n$-PGF module
$N$ having $M$ as a direct summand. Consequently,
$n=\PGFdim_RM
\leq \PGFdim_RN
\leq n$,
where the first inequality follows from Proposition~\ref{prop2.3}(iii)
and the second one from
Proposition~\ref{propA.4}(i). Thus, $\PGFdim_RN=n$. By the definition of a strongly $n$-PGF module,
there exists a short exact sequence
$0\rightarrow N\rightarrow Q\rightarrow N
\rightarrow 0$
with $\pd_RQ\leq n$.
Proposition~\ref{propA.4}(ii) then yields
$\pd_RQ=\PGFdim_RN=\PGFdim_RM=n$.
Hence, the finite PGF-dimension of $M$ is realized
as the projective dimension of the middle term
of a periodic short exact sequence for a module
containing $M$ as a direct summand.
This description is analogous to the characterization
of Gorenstein projective modules as direct summands
of strongly Gorenstein projective modules
due to Bennis--Mahdou~\cite{BM}.
\end{remark}




\end{document}